\newcommand{\ZZ}{\mathbb{Z}}
\newcommand{\QQ}{\mathbb{Q}}
\newcommand{\bop}{\textsc{B}_{+}}
\newcommand{\ehr}{\operatorname{Ehr}}
\newcommand{\sv}{\mathrm{v}}
\renewcommand{\int}{\operatorname{Int}}
\newcommand{\pyr}{\operatorname{Pyr}}
\newcommand{\qbinom}[2]{\genfrac{[}{]}{0pt}{}{#1}{#2}_q}
\newtheorem{theorem}{Theorem}[section]
\newtheorem{proposition}[theorem]{Proposition}
\newtheorem{conjecture}[theorem]{Conjecture}
\newtheorem{lemma}[theorem]{Lemma}
\newtheorem{definition}{Definition}
\newtheorem{remark}[theorem]{Remark}
\newtheorem{example}[theorem]{Example}
\newenvironment{proof}{\begin{trivlist}\item{\bf{Proof.}}}
  {\hfill\rule{2mm}{2mm}\end{trivlist}}
\title{$q$-analogues of Ehrhart polynomials}
\author{F. Chapoton\footnote{This work has been supported by the
    ANR program CARMA (ANR-12-BS01-0017-02).}}
\date{\today}
\begin{document}

\maketitle

\begin{abstract}
  One considers weighted sums over points of lattice polytopes, where
  the weight of a point $v$ is the monomial $q^{\lambda(v)}$ for some
  linear form $\lambda$. One proposes a $q$-analogue of the classical
  theory of Ehrhart series and Ehrhart polynomials, including Ehrhart
  reciprocity and involving evaluation at the $q$-integers.
\end{abstract}

\section*{Introduction}

The theory of Ehrhart polynomials, which was introduced by Eug{\`e}ne
Ehrhart in the 1960s \cite{ehrhart}, has now become a classical
subject. Let us recall it very briefly. If $Q$ is a lattice polytope,
meaning a polytope with vertices in a lattice, one can count the
number of lattice points inside $Q$. It turns out that the number of
lattice points in the dilated lattice polytope $n Q$ for some integer
$n$ is a polynomial function of $n$. This is called the Ehrhart
polynomial of the lattice polytope $Q$. Moreover, the value of the
Ehrhart polynomial at a negative integer $-n$ is (up to sign) the
number of interior lattice points in $n Q$. This phenomenon is called
Ehrhart reciprocity. This classical theory is for example detailed in
the book \cite{beck}.

In this article, one introduces a $q$-analogue of this theory, in
which the number of lattice points is replaced by a weighted sum,
which is a polynomial in the indeterminate $q$. One proves that these
weighted sums for dilated polytopes are values at $q$-integers of a
polynomial in $x$ with coefficients in $\QQ(q)$. Let us present this
in more detail.

Let $Q$ be a lattice polytope and let $\lambda$ be a linear form on
the ambient lattice of $Q$, assumed to take positive values on
$Q$. One considers the weighted sum
\begin{equation}
  W_{\lambda}(Q,q) = \sum_{x \in Q} q^{\lambda(x)},
\end{equation}
running over lattice points in $Q$, where $q$ is an
indeterminate. This is a polynomial $q$-analogue of the number of
lattice points in $Q$, which is the value at $q=1$.

Under two hypotheses of positivity and genericity on the pair
$(Q,\lambda)$, one proves that the polynomials $W_{\lambda}(n Q,q)$
for integers $n\geq 0$ are the values at $q$-integers $[n]_q$ of a
polynomial in $x$ with coefficients in $\QQ(q)$, which is called the
$q$-Ehrhart polynomial.

One also obtains a reciprocity theorem, which relates the value of the
$q$-Ehrhart polynomial at the negative $q$-integer $[-n]_q$ to the
weighted sum over interior points in $n Q$.

In the special case where the lattice polytope is the order polytope
of a partially ordered set $P$, the theory presented here is closely
related to the well-known theory of $P$-partitions, introduced by
Richard P. Stanley in \cite{stanley_ordered}. One can find there
$q$-analogues of Ehrhart series, which coincide with the one used
here. It seems though that the existence of $q$-Ehrhart polynomials is
new even in this setting.

The $q$-Ehrhart polynomials seem to have interesting properties in the
special case of empty polytopes. In particular, they vanish at
$x=-1/q$ by reciprocity, and the derivative at this point may have a
geometric meaning. One also presents an umbral property, which
involves some $q$-analogues of Bernoulli numbers that were introduced
by Carlitz in \cite{carlitz}.

\medskip

Our original motivation for this theory came from the study of some
tree-indexed series, involving order polytopes of rooted trees, in the
article \cite{serieqx}. In this study appeared some polynomials in $x$
with coefficients in $\QQ(q)$, who become Ehrhart polynomials when
$q=1$. Understanding this has led us to the results presented here.

\medskip

Some other generalisations of the classical Ehrhart theory have been
considered in \cite{stap1,stap2}, but they do not seem to involve evaluation
at $q$-integers.

\medskip

The article is organised as follows. In section \ref{qseries}, one
introduces the general setting and hypotheses and then studies the
$q$-Ehrhart series. In section \ref{qpoly}, one proves the existence
of the $q$-Ehrhart polynomial and obtains the reciprocity theorem. In
section \ref{otherprop}, various general properties of $q$-Ehrhart
series and polynomials are described. In section \ref{poset}, the
special case of order polytopes and the relationship with
$P$-partitions are considered. Section \ref{empty} deals with the
special case of empty polytopes.

\thanks{Thanks to Michèle Vergne for her very pertinent comments.}

\section{$q$-Ehrhart series}

\label{qseries}

In this section, one introduces the $q$-Ehrhart series, which is a
generating series for some weighted sums over dilatations of a
polytope, and describes this series as a rational function.

This section should not be considered as completely original: similar
series have been considered in many places, including
\cite{brion_ens,brion_bourbaki}. It is therefore essentially
a brief account in our own notations of more or less classical
material, adapted to a specific context.

\medskip

Let $M$ be a lattice, let $Q$ be a lattice polytope in $M$ and let $\lambda \in M^*$ be a linear form.

One will always assume that the pair $(Q,\lambda)$
satisfies the following conditions:
\begin{description}
  \item[Positivity] For every vertex $x$ of $Q$, $\lambda(x) \geq 0$.
  \item[Genericity] For every edge $x\text{---}y$ of $Q$, $\lambda(x) \not= \lambda(y)$.
\end{description}

Let $q$ be a variable. Let us define the weighted sum over lattice points
\begin{equation}
  W_{\lambda}(Q,q) = \sum_{x \in Q} q^{\lambda(x)},
\end{equation}
and the $q$-Ehrhart series
\begin{equation}
  \ehr_{Q,\lambda}(t,q) = \sum_{n\geq 0} W_{\lambda}(n Q,q) t^n,
\end{equation}
where $n Q$ is the dilatation of $Q$ by a factor $n$.

When $q=1$, the weighted sum becomes the number of lattice points and
the $q$-Ehrhart series becomes the classical Ehrhart series.

\begin{proposition}
  \label{poles_simples}
  The $q$-Ehrhart series $\ehr_{Q,\lambda}$ is a rational function in
  $t$ and $q$. Its denominator is a product without multiplicities of
  factors $1-t q^j$ for some integers $j$ with $0 \leq j \leq
  \max_Q(\lambda)$. The factor with index $j$ can appear only if there is a
  vertex $v$ of $q$ such that $\lambda(v)=j$.
\end{proposition}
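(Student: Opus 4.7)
The approach I would take is a Brion-style decomposition of each dilated polytope into contributions of its vertex tangent cones. For a vertex $v$ of $Q$, let $T_v$ denote the tangent cone of $Q$ at $v$ translated so that its apex lies at the origin, and set
\[
F_v(q) \;=\; \sum_{y \in T_v \cap M} q^{\lambda(y)},
\]
which I would interpret as a rational function in $q$ by further triangulating $T_v$ into simplicial subcones whose rays run along edges of $Q$ incident to $v$. For a simplicial subcone spanned by primitive lattice generators $e_1, \ldots, e_d$, the weighted lattice-point series has the familiar form $P(q)/\prod_j (1-q^{\lambda(e_j)})$ with $P$ a finite sum over a fundamental parallelepiped. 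The genericity hypothesis guarantees $\lambda(e_j) = \lambda(y_j) - \lambda(v) \neq 0$ for every edge direction $y_j - v$ at $v$, so none of these denominators vanishes and $F_v(q) \in \QQ(q)$ is well defined.

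Since the vertices of $nQ$ are exactly the points $nv$ and its tangent cone there is the translate $nv + T_v$, Brion's identity yields
\[
W_{\lambda}(nQ, q) \;=\; \sum_{v} q^{n\lambda(v)} F_v(q),
\]
and summing the geometric series in $n$ produces
\[
\ehr_{Q,\lambda}(t, q) \;=\; \sum_{v} \frac{F_v(q)}{1 - t q^{\lambda(v)}},
\]
which is manifestly a rational function in $t$ and $q$. By positivity each $\lambda(v)$ is a non-negative integer and clearly $\lambda(v) \leq \max_Q(\lambda)$. Whenever several vertices $v$ share a common value $j = \lambda(v)$, their contributions already sit over the single denominator $1-tq^j$; and since the polynomials $1-tq^j$ for distinct non-negative integers $j$ are pairwise coprime in $\QQ(q)[t]$, putting everything over a common denominator yields a product of factors $1-tq^j$ in which each $j$ appears at most once, and only when some vertex of $Q$ realises it. This is precisely the form asserted by the proposition.

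The main technical point to nail down is the justification of Brion's identity in the present formal $q$-setting: the infinite series $F_v(q)$ need not converge as a power series in $q$ when $\lambda$ takes both signs on the edges at $v$, so the equality must be interpreted in $\QQ(q)$. I would either invoke the standard meromorphic-continuation version of Brion, or, more self-containedly, argue from scratch by taking a half-open triangulation of $Q$ whose simplices have their vertices among the vertices of $Q$, passing to the cone in $\ZZ \oplus M$, and summing the resulting simplicial-cone generating series — an argument that stays throughout inside $\QQ(t,q)$ and in which genericity enters exactly to rule out the vanishing denominators described above.
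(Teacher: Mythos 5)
Your argument is correct, but it takes a genuinely different route from the paper. You decompose via Brion's theorem into vertex tangent cones, specialise the multivariate generating functions along $\lambda$ (using Genericity to ensure $\lambda$ is nonzero on every edge direction at each vertex, so the specialisation is defined), and obtain $\ehr_{Q,\lambda}=\sum_v F_v(q)/(1-tq^{\lambda(v)})$ directly; the multiplicity-one claim and the ``only at values $\lambda(v)$'' claim are then immediate, since each vertex contributes exactly one simple pole in $t$ and the factors $1-tq^j$ are pairwise coprime over $\QQ(q)$. The paper instead first constructs a special $\lambda$-adapted triangulation of $Q$ using no new vertices (built inductively by coning each face from its $\lambda$-minimal vertex, so that \emph{any two} vertices of \emph{any} simplex have distinct $\lambda$-values, not just those joined by an edge of $Q$), then writes $\ehr_{Q,\lambda}$ as an inclusion--exclusion sum over simplices and quotes the classical cone-over-a-simplex result to identify each simplex's denominator as a product of distinct factors $1-tq^{\lambda(v)}$. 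Your approach buys a cleaner treatment of the multiplicity statement at the cost of invoking Brion (whose validity in the formal $\QQ(q)$ setting you rightly flag as the point needing care); the paper's approach is more elementary but needs the adapted triangulation as its key lemma. Note that your self-contained fallback --- an arbitrary (half-open) triangulation with vertices among those of $Q$ --- would not suffice as stated: two vertices of a simplex in such a triangulation need not span an edge of $Q$, so Genericity alone does not prevent a repeated value $\lambda(v)=\lambda(v')$ within one simplex, which would produce a factor $(1-tq^j)^2$ in that simplex's denominator. Repairing this is exactly what the paper's special triangulation is for, so if you go that route you need its analogue.
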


The proof will be as follows. The first step is to define a special
triangulation of $Q$, depending on $\lambda$. Then the $q$-Ehrhart series
of $Q$ is an alternating sum of $q$-Ehrhart series of simplices of the
special triangulation. The last step is to prove that the $q$-Ehrhart
series of every simplex of the chosen triangulation has the expected
properties of $\ehr_{Q,\lambda}$. This implies the proposition, as
these properties are stable by linear combinations.

\medskip

It is well-known, see for example \cite[Theorem 3.1]{beck}, that every
polytope can be triangulated using no new vertices. One will need here
a special triangulation, which depends on the linear form $\lambda$.

Remark that the Genericity condition implies that every face $F$ of
$Q$ contains a unique vertex where $\lambda$ is minimal. Let us call
it the minimal vertex of $F$.

\begin{proposition}
  There exists a unique triangulation of $Q$ with vertices
  the vertices of $Q$, such that every simplex contained in a face $F$ of $Q$
  contains the minimal vertex of $F$.

  In this triangulation, for every edge $x\text{---}y$ of every
  simplex, $\lambda(x) \not= \lambda(y)$.
\end{proposition}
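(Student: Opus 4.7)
The plan is to induct on $d = \dim Q$, using the classical ``pulling'' triangulation based at the $\lambda$-minimum vertex of $Q$. A preliminary observation I will use throughout: Genericity forces $\lambda$ to attain its minimum at a \emph{unique} vertex $v_F$ on every face $F$ of $Q$, since otherwise the face of $F$ on which $\lambda$ is minimal would have dimension at least one and hence contain an edge on which $\lambda$ is constant, violating Genericity. The case $d = 0$ is trivial.

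For the inductive step, set $v_0 = v_Q$ and let $G_1, \dots, G_k$ be the facets of $Q$ not containing $v_0$. Each pair $(G_i, \lambda|_{G_i})$ still satisfies Positivity and Genericity, so by induction there is a unique triangulation $T_i$ of $G_i$ with the required property. Applying inductive uniqueness to $G_i \cap G_j$ shows the $T_i$ agree on common subfaces, so they glue into a triangulation of $B = G_1 \cup \dots \cup G_k$. Define $T(Q)$ by coning this boundary triangulation with apex $v_0$; convexity of $Q$ yields $Q = v_0 \ast B$ with disjoint relative interiors of individual cones, so $T(Q)$ is a triangulation of $Q$ on the prescribed vertex set. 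To verify the defining property, take a face $F$ of $Q$ and a simplex $\sigma \in T(Q)$ with $\sigma \subseteq F$ and $\dim \sigma = \dim F$. If $v_0 \in F$ then $v_F = v_0$; were $v_0 \notin \sigma$, then $\sigma$ would be a simplex of some $T_i$ lying in $F \cap G_i$, and the dimension constraint would force $F \subseteq G_i$, contradicting $v_0 \in F$. If $v_0 \notin F$, then $F \subseteq G_i$ for some $i$ (being an intersection of facets at least one of which misses $v_0$), so $\sigma \in T_i$, and induction on $G_i$ yields $v_F \in \sigma$. The edge condition is then immediate: for any edge $\{x,y\}$ of any simplex of $T(Q)$, the smallest face $F$ containing $\{x,y\}$ satisfies $v_F \in \{x,y\}$ by the above, so one endpoint is $v_F$ and uniqueness of the $\lambda$-minimum on $F$ forces the other to have strictly larger $\lambda$-value.

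For uniqueness, any triangulation $T'$ of $Q$ satisfying the property has every top-dimensional simplex containing $v_0$ (apply the property to $F = Q$), so $T'$ is a cone with apex $v_0$ over its restriction to $B$; this restriction, cut to each $G_i$, satisfies the property for $(G_i, \lambda|_{G_i})$ and therefore equals $T_i$ by the inductive uniqueness. Hence $T' = T(Q)$. The step I expect to be most delicate is the geometric verification that the cone construction yields a genuine triangulation --- namely the identity $Q = v_0 \ast B$ with interiors of cones disjoint --- together with the dimension accounting in the case analysis when $\sigma \subseteq F \cap G_i$ with $v_0 \in F$; the remaining verifications are straightforward bookkeeping propagated through the induction.
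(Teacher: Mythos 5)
Your proof is correct and takes essentially the same route as the paper's: induction on dimension, coning the glued triangulations of the facets missing the $\lambda$-minimal vertex $v_0$, with uniqueness forced by the fact that every maximal simplex must contain $v_0$. One small repair: your edge argument applies the defining property to the edge $\{x,y\}$ inside its \emph{smallest} containing face $F$, whereas you verified that property only under the hypothesis $\dim\sigma=\dim F$; either rerun your case analysis with ``$F$ is the smallest face containing $\sigma$'' in place of the dimension hypothesis (minimality then rules out $\sigma\subseteq F\cap G_i\subsetneq F$), or argue as the paper does that every edge either contains $v_0$, whose $\lambda$-value is strictly smaller than that of every other vertex of $Q$, or lies in some facet $G_i$ where the inductive hypothesis applies.
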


\begin{proof}
  By induction on the dimension of $Q$. 

  The statement is clear if the dimension is $0$.

  Assume that the dimension is at least $1$. Let $x_0$ be the minimal
  vertex of $Q$.

  By induction, there exists a triangulation of every facet
  $F$ of $Q$ not containing $x_0$, with the stated properties.

  If two facets $F$ and $F'$ of $Q$ share a face $G$, then the
  restrictions of their triangulations give two triangulations of $G$,
  both with the stated properties. By uniqueness, they must be the
  same.

  Hence there exists a triangulation of the union of all facets of $Q$
  not containing $x_0$. One can define a triangulation of $Q$ by
  adding $x_0$ to every simplex.

  Conversely, this is the unique triangulation with the stated
  properties. Indeed, any maximal simplex of such a triangulation must
  contain the vertex $x_0$, hence the triangulation must be induced by
  triangulations of faces not containing $x_0$. By restriction, these
  triangulations must have the stated properties. Uniqueness follows.

  The second property of this triangulation follows from its inductive
  construction. Indeed, the value of $\lambda$ at $x_0$ is strictly
  less than the value of $\lambda$ at every other vertex of $Q$.
\end{proof}

By the principle of inclusion-exclusion, one can then write the
$q$-Ehrhart series of $Q$ as an alternating sum of $q$-Ehrhart series of all
simplices of the special triangulation. Let us now describe these
summands.

\begin{proposition}
  \label{serie_simplexe}
  Let $S$ be a lattice simplex, such that for every pair of distinct
  vertices $x,y$ of $S$, one has $\lambda(x) \not= \lambda(y)$. Then
  the $q$-Ehrhart series $\ehr_{Q,\lambda}$ is a rational function in
  $t$ and $q$ whose denominator is the product of $1-t q^j$ for all
  integers $j = \lambda(v)$ where $v$ is a vertex of $S$. The integers
  $j$ satisfy $0 \leq j \leq \max_S(\lambda)$.
\end{proposition}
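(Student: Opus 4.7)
The plan is to use the standard cone-and-fundamental-parallelepiped decomposition for simplicial cones. I would embed $M$ as the affine hyperplane of height $1$ inside $M \oplus \ZZ$, so that lattice points of $nS$ correspond bijectively to lattice points of height $n$ in the simplicial cone $C \subset M_\QQ \oplus \QQ$ generated by the rays $(v_i, 1)$ for $i = 0, \ldots, d$, where $v_0, \ldots, v_d$ are the vertices of $S$. Extending $\lambda$ to $M \oplus \ZZ$ by zero on the height coordinate, the monomial $q^{\lambda(x)} t^n$ for a lattice point $x \in nS$ is exactly $q^{\lambda(\tilde x)} t^{h(\tilde x)}$ for the corresponding lattice point $\tilde x \in C$, where $h$ denotes the height.

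Next I would invoke the classical fact that every lattice point of $C$ can be written uniquely as
\begin{equation}
  \tilde x = p + \sum_{i=0}^d a_i (v_i, 1), \qquad a_i \in \ZZ_{\geq 0},
\end{equation}
where $p$ runs over the (finite) set $\Pi \cap (M \oplus \ZZ)$ of lattice points in the half-open fundamental parallelepiped $\Pi = \{\sum_i t_i (v_i, 1) : 0 \leq t_i < 1\}$. Since $\lambda$ and $h$ are both linear and compatible with this decomposition, summing $q^{\lambda} t^{h}$ over $C \cap (M \oplus \ZZ)$ factors as
\begin{equation}
  \ehr_{S,\lambda}(t,q) = \left( \sum_{p \in \Pi \cap (M \oplus \ZZ)} q^{\lambda(p)} t^{h(p)} \right) \prod_{i=0}^d \frac{1}{1 - t\, q^{\lambda(v_i)}}.
\end{equation}
The numerator is a Laurent polynomial (in fact a polynomial, by positivity of $h$ on $\Pi$), so this already exhibits $\ehr_{S,\lambda}$ as a rational function in $t$ and $q$ whose denominator divides $\prod_{i=0}^d (1 - t q^{\lambda(v_i)})$.

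Because the values $\lambda(v_0), \ldots, \lambda(v_d)$ are pairwise distinct by hypothesis, the factors $1 - t q^{\lambda(v_i)}$ are pairwise distinct, so this is a product without multiplicities, as claimed. The bounds $0 \leq \lambda(v_i) \leq \max_S(\lambda)$ are immediate: the upper bound is by definition, and the lower bound follows from the positivity assumption on $(Q,\lambda)$ applied to the vertices of $S$ (which are vertices of $Q$, as $S$ is a simplex of the special triangulation).

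I do not expect any real obstacle: the content is entirely classical, and once the embedding and parallelepiped decomposition are set up, the factorization is forced. The only point deserving care is the bookkeeping that the extension of $\lambda$ by zero on the height coordinate is compatible with the decomposition, so that the weight on a ray-generator $(v_i, 1)$ is exactly $t\, q^{\lambda(v_i)}$; this is immediate from linearity.
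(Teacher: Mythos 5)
Your proof is correct and follows essentially the same route as the paper: the paper also passes to the cone over $\{1\}\times S$ with generating rays $(1,v)$ and simply cites the classical result (Beck--Robins, Theorem 3.5) whose fundamental-parallelepiped proof you have written out explicitly. The only cosmetic difference is that you supply the details of the cited theorem rather than invoking it.
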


\begin{proof}
  This is a special case of a classical result, see for instance
  \cite[Theorem 3.5]{beck}.

  Consider the cone $C$ over the simplex $\{1\} \times S$, in the
  product space $\ZZ \times M$. The generating rays of $C$ are exactly
  the vectors $(1,v)$ for vertices $v$ of $S$. According to the cited
  theorem, the poles of the generating series for $C$, which is also
  the $q$-Ehrhart series for $S$, are given by a factor $1-t
  q^{\lambda(v)}$ for every vertex $v$ of $S$. By the hypothesis, all
  these poles are distinct. By the Positivity condition, the exponents
  of $q$ are positive.
\end{proof}

From all this, one immediately obtains the statement of proposition
\ref{poles_simples}.

\begin{proposition}
  \label{min_et_max}
  The factor $1-q^{\min_Q(\lambda)}t$ and the factor
  $1-q^{\max_Q(\lambda)}t$ are always present in the denominator of
  $\ehr_{Q,\lambda}$.
\end{proposition}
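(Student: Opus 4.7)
The key observation is that Genericity forces $\lambda$ to attain its extrema on $Q$ at \emph{unique} vertices. Indeed, the face of $Q$ on which $\lambda$ is minimal has no edge with distinct $\lambda$-values at its endpoints, so by Genericity it must be zero-dimensional. Call this vertex $v_{\min}$, with value $m=\min_Q\lambda$, and define $v_{\max}$ and $M=\max_Q\lambda$ analogously. By linearity of $\lambda$ and dilation, $\min_{nQ}\lambda = nm$ is attained only at the lattice point $n v_{\min}$, and symmetrically for the maximum, so
\[
W_\lambda(nQ,q)=q^{nm}+(\text{strictly higher powers of }q)+q^{nM},
\]
with the extremal $q$-coefficients both equal to $1$.

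Combining this with Proposition~\ref{poles_simples}, write the (square-free) denominator of $\ehr_{Q,\lambda}$ as $\prod_{j\in J}(1-q^j t)$ for some $J\subseteq\{0,1,\dots,M\}$, and perform a partial fraction decomposition in $t$:
\[
\ehr_{Q,\lambda}(t,q)=R(t,q)+\sum_{j\in J}\frac{A_j(q)}{1-q^j t},\qquad A_j(q)\in\QQ(q)\setminus\{0\}.
\]
For every $n>\deg_t R$, comparing coefficients of $t^n$ gives $W_\lambda(nQ,q)=\sum_{j\in J}A_j(q)\,q^{jn}$. To extract $m\in J$, expand $A_j(q)=c_j q^{a_j}(1+O(q))$ at $q=0$ with $c_j\neq 0$; then $A_j(q)q^{jn}$ has lowest $q$-degree $a_j+jn$. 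For $n$ large enough the linear slope $j$ dominates the constant shift $a_j$, so the minimum of $a_j+jn$ over $j\in J$ is attained uniquely at $j^{\ast}=\min J$. Matching this with the known bottom degree $nm$ of $W_\lambda(nQ,q)$, valid for all large $n$, forces $j^{\ast}=m$ and hence $m\in J$. Expanding instead at $q=\infty$ and tracking the top $q$-degree gives symmetrically $\max J=M$, so $M\in J$.

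The only subtle point is to rule out cancellations among the terms $A_j(q)q^{jn}$ that might destroy the extremal $q$-degree of their sum. This is handled by the observation above: because the slope $j$ in $a_j+jn$ (resp. $b_j+jn$ at infinity) dominates for $n\to\infty$, the extremal exponents are reached by a unique summand, so the leading and trailing $q$-terms of $W_\lambda(nQ,q)$ cannot be annihilated. Everything else is bookkeeping.
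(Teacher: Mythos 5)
Your proof is correct and follows essentially the same strategy as the paper's: use Genericity to get a unique extremal vertex (hence a single extremal term $q^{nm}$, resp. $q^{nM}$, in $W_\lambda(nQ,q)$), then compare the extremal $q$-degree of the coefficient of $t^n$ in the partial fraction decomposition with $nm$ and $nM$ as $n\to\infty$. The only cosmetic difference is that the paper gets the conclusion by combining a one-sided degree bound with the inequality $\ell\leq\max_Q(\lambda)$ already supplied by Proposition~\ref{poles_simples}, whereas you pin down $\min J$ and $\max J$ directly via the unique-dominant-slope argument that rules out cancellation.
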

\begin{proof}
  Let $m$ be $\max_Q(\lambda)$. Then for every $n\geq 0$, there is
  exactly one term $q^{n m}$ in the weighted sum $W_{\lambda}(n Q,q)$,
  corresponding to the unique maximal vertex of $n Q$.

  On the other hand, let $\ell$ be the maximal $j$ such that $1-q^j t$
  is a pole of $\ehr_{Q,\lambda}$. In the Taylor expansion of this
  fraction, the coefficient of $t^n$ is a polynomial in $q$ with
  degree at most $n \ell + k$, for some $k\geq 0$ which does not
  depend on $n$. It follows that $\ell \geq m$.

  By proposition \ref{poles_simples}, it is already known that $\ell
  \leq m$. Therefore $\ell=m$ and the result follows for the pole $1
  -q^m t$.

  The proof for the minimal case is similar.
\end{proof}

\begin{remark}
  The value of the $q$-Ehrhart series at $t=0$ is $1$, because the
  unique point in $0 Q$ is $\{0\}$.
\end{remark}

\begin{remark}
  Contrary to classical Ehrhart series, the numerator does not always
  have only positive coefficients, see example \ref{exc} below.
\end{remark}

\subsection{Examples}

\begin{example}
  \label{exa}
  Consider the polytope in $\ZZ$ with vertices $(0),(1)$ and
  the linear form $(1)$. The $q$-Ehrhart series is
  \begin{equation}
    \frac{1}{(1-t)(1-q t)} = 1 + (1+q) t + (1+q+q^2) t^2 + \cdots.
  \end{equation}
\end{example}

\begin{example}
  \label{exb}
  Consider the polytope in $\ZZ^2$ with vertices $(0,0),(1,0),(1,1)$ and
  the linear form $(1,1)$. The $q$-Ehrhart series is
  \begin{equation}
    \frac{1}{(1-t)(1-q t)(1-q^2 t)} = 1 + (1+q+q^2) t + (q^4 + q^3 + 2 q^2 + q + 1) t^2 + \dots
  \end{equation}
\end{example}


\begin{example}
  \label{exc}
  Consider the polytope in $\ZZ^2$ with vertices $(0,0),(1,0)$,
  $(1,1),(2,1)$ and the linear form $(1,1)$. The $q$-Ehrhart series is
  \begin{equation}
    \frac{1- q^3 t^2}{(1 - t)(1-q t)(1-q^2 t)(1- q^3 t)} = 1 + (1+q+q^2+q^3) t + \dots
  \end{equation}
  Note that its numerator has a negative coefficient.
\end{example}

\begin{example}
  \label{exd}
  Consider the polytope in $\ZZ^2$ with vertices
  $(0,0),(1,0),(1,1),(0,3)$ and the linear form $(1,1)$. The $q$-Ehrhart
  series is
  \begin{equation}
    \frac{ 1 + (q^2+q)t - (q^4+q^3+q^2)t^2}{(1-t)(1-q t)(1-q^2 t)(1-q^3 t)}.
  \end{equation}
\end{example}


\section{$q$-Ehrhart polynomial}

\label{qpoly}

In this section, one proves the existence of the $q$-Ehrhart
polynomial, and obtains a $q$-analog of Ehrhart reciprocity.

\medskip

Let $[n]_q$ be the $q$-integer
\begin{equation*}
  [n]_q = \frac{q^n-1}{q-1}.
\end{equation*}

Let $Q$ be a lattice polytope and $\lambda$ be a linear form that
satisfy the Positivity and Genericity conditions.

Let us write $m$ for $\max_Q(\lambda)$.

\begin{theorem}
  \label{ehrhart_poly}
  There exists a polynomial $L_{Q,\lambda} \in \QQ(q)[x]$ such that
  \begin{equation}
    \forall\, n \in \ZZ_{\geq 0} \quad L_{Q,\lambda}([n]_q) = W_\lambda (n Q,q).
  \end{equation}
  The degree of $L_{Q,\lambda}$ is $m$. The coefficients of
  $L_{Q,\lambda}$ have poles only at roots of unity of order less than
  $m$.
\end{theorem}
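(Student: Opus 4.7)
The plan is to extract the polynomial $L_{Q,\lambda}$ from a partial-fraction decomposition of $\ehr_{Q,\lambda}(t,q)$ and then substitute $q^n = 1 + (q-1)[n]_q$ to convert the resulting exponentials into polynomials in the $q$-integer.

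By Propositions \ref{poles_simples} and \ref{min_et_max}, one has $\ehr_{Q,\lambda}(t,q) = N(t,q)/\prod_{j \in J}(1 - tq^j)$ for some subset $J \subseteq \{0, 1, \ldots, m\}$ containing both $0$ and $m$. A preliminary point is that this rational function is proper in $t$ (i.e.\ $\deg_t N < |J|$); this follows from the simplex-by-simplex analysis in the proof of Proposition \ref{poles_simples}, each simplicial generating function being given by a Brion-type formula whose numerator (summing over the fundamental parallelepiped) has degree in $t$ strictly below that of the denominator. Properness permits a partial-fraction decomposition over the field $\mathbb{Q}(q)(t)$:
\begin{equation*}
\ehr_{Q,\lambda}(t,q) = \sum_{j \in J}\frac{a_j(q)}{1 - t q^j},\qquad a_j(q) = \frac{N(q^{-j},q)}{\prod_{k \in J \setminus \{j\}}(1 - q^{k-j})} \in \mathbb{Q}(q).
\end{equation*}
Extracting the coefficient of $t^n$ gives the closed form $W_\lambda(nQ,q) = \sum_{j \in J} a_j(q)\,q^{jn}$, valid for every $n \geq 0$.

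The key move is now the identity $q^n = 1 + (q-1)[n]_q$, which upon raising to the $j$-th power gives $q^{jn} = (1+(q-1)[n]_q)^j$, a polynomial in $[n]_q$ of degree exactly $j$ with coefficients in $\mathbb{Z}[q]$. Setting
\begin{equation*}
L_{Q,\lambda}(x) := \sum_{j \in J} a_j(q)\,(1+(q-1)x)^j \in \mathbb{Q}(q)[x]
\end{equation*}
then satisfies $L_{Q,\lambda}([n]_q) = W_\lambda(nQ,q)$ for all $n \geq 0$ by construction. The coefficient of $x^m$ receives contributions only from $j = m$ and equals $a_m(q)(q-1)^m$; it is nonzero because $1 - tq^m$ is a genuine pole of $\ehr_{Q,\lambda}$ (so $a_m(q) \neq 0$ by Proposition \ref{min_et_max}), giving $\deg L_{Q,\lambda} = m$.

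The location of the poles of the coefficients, as functions of $q$, is the main technical obstacle. The residues $a_j(q)$ have denominators $\prod_{k \in J \setminus \{j\}}(1 - q^{k-j})$, which factor cyclotomically as products of $\Phi_d(q)$ with $d \mid |k-j|$; since $|k-j| \leq m$, this already gives an a priori bound on the orders of the poles. To obtain the asserted bound, one writes the coefficient of $x^r$ in $L_{Q,\lambda}$ as $(q-1)^r \sum_{j \in J,\, j \geq r}\binom{j}{r} a_j(q)$ and uses the prefactor $(q-1)^r$ to absorb the $\Phi_1$-contributions, then tracks carefully which higher cyclotomic factors actually survive. Where contributions from different indices $a_j$ collide at the same cyclotomic factor (notably the pieces arising from the extremal indices $j=0$ and $j=m$), one would look for cancellations to sharpen the bound; I expect this cyclotomic bookkeeping to be where the bulk of the work lies.
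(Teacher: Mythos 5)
Your construction of $L_{Q,\lambda}$ is exactly the paper's: a partial-fraction decomposition $\sum_j a_j(q)/(1-tq^j)$ followed by the substitution $q^{jn}=(1+(q-1)[n]_q)^j$ (the paper writes $1+qx-x$ for your $1+(q-1)x$), with the degree claim settled by Proposition \ref{min_et_max}. Your preliminary remark that the fraction is proper in $t$ is a point the paper passes over silently, and it is worth making explicit.

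The genuine gap is the final claim about the poles of the coefficients, which you set up but do not prove. You reduce it to showing that in $(q-1)^r\sum_{j\ge r}\binom{j}{r}a_j(q)$ all the unwanted singularities cancel, and you then write that you ``expect this cyclotomic bookkeeping to be where the bulk of the work lies'' and that one ``would look for cancellations''. That bookkeeping is precisely the content of the statement, and the residue formula offers no visible mechanism for it: each $a_j(q)$ carries the denominator $\prod_{k\in J\setminus\{j\}}(1-q^{k-j})$ and hence a pole at $q=1$ of order up to $|J|-1$, while your prefactor $(q-1)^r$ removes only $r$ of these; for small $r$ you need massive, unexplained cancellation among the $a_j$ just to avoid a pole at $q=1$, before even discussing higher cyclotomic factors. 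The paper sidesteps the computation with one further observation that your argument is missing: since $L_{Q,\lambda}$ has degree $m$, it is determined by Lagrange interpolation at the $m+1$ nodes $[0]_q,\dots,[m]_q$, where its values $W_\lambda(nQ,q)$ are honest polynomials in $q$; the only denominators that can then occur are the node differences $[i]_q-[j]_q=q^{\min(i,j)}[\,|i-j|\,]_q$, whose roots among roots of unity have order at most $m$, and which do not vanish at $q=1$ at all since $[i]_1-[j]_1=i-j\neq 0$. You should replace your residue analysis by this interpolation argument (or supply the cancellation you allude to); as written the pole statement is asserted, not proved. A minor point in your favour: the bound obtainable this way is ``order at most $m$'' rather than ``less than $m$'' --- compare Example \ref{exb}, where $m=2$ and $\Phi_2$ appears in the denominator --- so your a priori bound coming from $d\mid|k-j|\le m$ was already the correct target, and the paper's own phrasing is off by one.
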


\begin{proof}
  Consider the $q$-Ehrhart series $\ehr_{Q,\lambda}$. By Proposition
  \ref{poles_simples}, it can be written as a sum
  \begin{equation}
    \sum_{j=0}^{m} c_j \frac{1}{1-q^j t},
  \end{equation}
  for some coefficients $c_j$ in $\QQ(q)$. Expanding one of the simple
  fractions, one gets
  \begin{equation*}
    \frac{1}{1-q^j t} = \sum_{n\geq 0 } q^{n j} t^n.
  \end{equation*}

  Because
  \begin{equation}
    \label{coeur_eval}
    (1+q x-x)\mid_{x=[n]_q} = q^n,
  \end{equation}
  the value of the polynomial $(1 + q x - x)^j$ at the $q$-integer
  $[n]_q$ is given by $q^{n j}$.
  
  Define the polynomial $L_{Q,\lambda}(x)$ by
  \begin{equation*}
    \sum_{j=0}^{m} c_j (1 + q x - x)^j.
  \end{equation*}
  It follows that the value $L_{Q,\lambda}([n]_q)$ is exactly the
  coefficient of $t^n$ in the $q$-Ehrhart series $\ehr_{Q,\lambda}$. This
  is the expected property.

  The statement about the degree of $L_{Q,\lambda}(x)$ is clear from
  the previous formula and proposition \ref{min_et_max}.

  The polynomial $L_{Q,\lambda}(x)$ can therefore be recovered by
  interpolation at the $q$-integers between $[0]_q $ and $[m]_q$. The
  stated property of poles of its coefficients follows.
\end{proof}

\begin{remark}
  Contrary to the case of classical Ehrhart polynomials, whose degree
  is bounded by the ambient dimension, the degree here is the maximal
  value of the linear form on the polytope, and can be arbitrary large
  in any fixed dimension.
\end{remark}

\begin{remark}
  Obviously, letting $q=1$ in the $q$-Ehrhart polynomial recovers the
  classical Ehrhart polynomial.
\end{remark}

\begin{example}
  Consider the four polytopes of examples \ref{exa},
  \ref{exb},\ref{exc} and \ref{exd}. Their $q$-Ehrhart polynomials are
  \begin{align*}
    & q x + 1,\\
    & \frac{(q x + 1) (q^2 x + q + 1)}{q + 1},\\
    & \frac{(q x + 1)^2 (q^2 x - q x + q + 1)}{q + 1},\\
    & (q x + 1) (q (q - 1) x^2 + 2 q x + 1).
  \end{align*}
  The reader can check the values at $x = 0,1$ and the reduction to
  the classical Ehrhart polynomial at $q=1$. All four examples being
  empty lattice polytopes, the values at $[-1]_q$ vanish.
\end{example}

\subsection{$q$-Ehrhart reciprocity}

If $Q$ is a polytope, let us denote by $\int(Q)$ the interior of $Q$.

Let
\begin{equation}
  W_{\lambda}(\int(n Q),q) = \sum_{x \in \int(n Q)} q^{\lambda(x)}
\end{equation}
be the weighted sum over interior lattice points in $n Q$.

Let $L_{Q,\lambda}$ be the $q$-Ehrhart polynomial of $(Q,\lambda)$.

The following theorem is a $q$-analogue of Ehrhart reciprocity.

\begin{theorem}
  \label{ehrhart_reciprocity}
  For every integer $n \in \ZZ_{>0}$, one has
  \begin{equation}
    L_{Q,\lambda}([-n]_q) = (-1)^{d} W_{\lambda}(\int(n Q),1/q),
  \end{equation}
  where $d$ is the dimension of $Q$.
\end{theorem}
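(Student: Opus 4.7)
The plan is to prove the theorem by first establishing a Stanley-type reciprocity at the level of the rational function $\ehr_{Q,\lambda}(t,q)$, and then reading off the polynomial identity by a partial-fraction calculation. Two ingredients are already available: from Proposition \ref{poles_simples} one has a decomposition $\ehr_{Q,\lambda}(t,q) = \sum_j c_j(q)/(1 - q^j t)$ with $c_j(q)\in\QQ(q)$, and the construction of $L_{Q,\lambda}$ in the proof of Theorem \ref{ehrhart_poly} shows that $L_{Q,\lambda}([k]_q) = \sum_j c_j(q) q^{jk}$ for every integer $k$, positive or negative. In particular $L_{Q,\lambda}([-n]_q) = \sum_j c_j(q) q^{-jn}$.

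The central step is to establish the rational-function reciprocity
\begin{equation*}
  \ehr_{Q,\lambda}(1/t,1/q) = (-1)^{d+1} \sum_{n \geq 1} W_{\lambda}(\int(nQ),q)\, t^n,
\end{equation*}
a weighted $q$-analogue of classical Stanley reciprocity for rational cones. I would interpret $\ehr_{Q,\lambda}(t,q)$ as the $(t,q)$-character sum over the lattice points of the cone $C \subset \ZZ \times M$ generated by the rays $(1,v)$ for $v$ a vertex of $Q$, with weight $(n,x)\mapsto t^n q^{\lambda(x)}$. Using the special triangulation of Proposition 1.2, one decomposes $C$ into simplicial sub-cones; for each simplicial cone with generators $u_0,\ldots,u_d$, the half-open fundamental parallelepiped yields explicit rational-function formulas for the character sums over the cone and over its interior, and the bijection $\xi \leftrightarrow (\sum_i u_i) - \xi$ between the two half-open parallelepipeds gives Stanley reciprocity with sign $(-1)^{d+1}$. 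Assembling the simplex-level identities via inclusion-exclusion over the triangulation yields the displayed identity. The main obstacle is carrying out this inclusion-exclusion cleanly on the interior side, since interior lattice points of $nQ$ may lie on lower-dimensional simplices of the triangulation and must each be counted exactly once; partitioning $Q$ into the relative interiors of \emph{all} simplices of the triangulation (of every dimension) is the natural way to handle this.

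Once the rational-function identity is in hand, the conclusion is a short computation. As a formal power series in $t$, one has $1/(1 - q^{-j}/t) = -\sum_{n \geq 1} q^{jn} t^n$, so the left-hand side expands to $-\sum_{n \geq 1}\bigl(\sum_j c_j(1/q)\, q^{jn}\bigr) t^n$. Matching coefficients of $t^n$ gives $\sum_j c_j(1/q)\, q^{jn} = (-1)^d W_{\lambda}(\int(nQ),q)$. Substituting $q \mapsto 1/q$ then yields $\sum_j c_j(q)\, q^{-jn} = (-1)^d W_{\lambda}(\int(nQ),1/q)$, whose left-hand side is precisely $L_{Q,\lambda}([-n]_q)$ by the first paragraph, completing the proof.
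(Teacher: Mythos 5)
Your proposal is correct and follows essentially the same route as the paper: both reduce the theorem to the Stanley reciprocity identity $\ehr_{Q,\lambda}(1/t,1/q) = (-1)^{d+1}\ehr_{\int(Q),\lambda}(t,q)$ for the cone over $Q$, and then extract the polynomial identity from the partial-fraction decomposition $\sum_j c_j/(1-q^j t)$ by expanding $1/(1-q^{-j}t^{-1})$ in positive powers of $t$ (the paper packages this last expansion as Lemma \ref{lemme_zero}, applied to $(1+qx-x)^j$, which is exactly your computation). The only divergence is that the paper invokes Stanley's reciprocity theorem for rational cones as a citation, whereas you sketch its standard half-open-parallelepiped proof via the special triangulation.
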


\begin{proof}  
  By Stanley's reciprocity theorem for rational cones \cite[Theorem
  4.3]{beck}, applied to the cone over the polytope $Q$ and with
  variables specialised to $t$ and appropriate powers of $q$, one
  obtains
  \begin{equation}
    \label{stanley_reciprocity}
    \ehr_{Q,\lambda}(1/t,1/q) = (-1)^{d+1} \ehr_{\int(Q),\lambda}(t,q),
  \end{equation}
  where
  \begin{equation}
    \ehr_{\int(Q),\lambda}(t,q) = \sum_{n \geq 1} W_{\lambda}(\int(n Q),q) t^n.
  \end{equation}

  By definition of the $q$-Ehrhart series, one has
  \begin{equation*}
    \ehr_{Q,\lambda}(1/t,1/q) = \sum_{n \leq 0}  L_{Q,\lambda}([-n]_{1/q}) t^n.
  \end{equation*}
  By lemma \ref{lemme_zero} below, this is the same as
  \begin{equation}
    - \sum_{n \geq 1} L_{Q,\lambda}([-n]_{1/q}) t^n.
  \end{equation}

  From this, one deduces that
  \begin{equation}
    \sum_{n \geq 1} W_{\lambda}(\int(n Q),q) t^n =
    (-1)^d \sum_{n \geq 1} L_{Q,\lambda}([-n]_{1/q}) t^n,
  \end{equation}
  which is equivalent to the statement of the proposition.
\end{proof}

\begin{lemma}
  \label{lemme_zero}
  Let $P$ be a polynomial in $x$ with coefficients in $\QQ(q)$.  Then
  \begin{equation*}
    F^{+} = \sum_{n\geq 0} P([n]_q)t^n \quad\text{and}\quad F^{-} = \sum_{n<0} P([n]_q)t^n
  \end{equation*}
  are rational functions in $t,q$ and $F^{+}+F^{-}=0$.
\end{lemma}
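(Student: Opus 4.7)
The strategy is to decompose $P([n]_q)$ as a $\QQ(q)$-linear combination of exponentials $q^{jn}$, and then sum the resulting geometric series in both directions.

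First, I would invoke the identity $1+(q-1)[n]_q = q^n$, which is essentially equation (\ref{coeur_eval}). This means that writing $P(x)$ as a polynomial of degree $d$ in the auxiliary variable $1+(q-1)x$ expresses $P([n]_q)$ as a polynomial in $q^n$ with coefficients in $\QQ(q)$. Concretely, there exist $a_0, \dots, a_d \in \QQ(q)$, independent of $n$, such that $P([n]_q) = \sum_{j=0}^d a_j q^{jn}$ for every $n \in \ZZ$.

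Second, I would substitute this into the definition of $F^{+}$ and exchange the order of summation to obtain
$$F^{+} = \sum_{j=0}^d a_j \sum_{n \geq 0} (q^j t)^n = \sum_{j=0}^d \frac{a_j}{1-q^j t},$$
which is manifestly a rational function in $t$ and $q$. For $F^{-}$, the analogous computation after the change of variable $n=-m$ gives
$$F^{-} = \sum_{j=0}^d a_j \sum_{m \geq 1} (q^j t)^{-m} = \sum_{j=0}^d a_j \cdot \frac{(q^j t)^{-1}}{1-(q^j t)^{-1}} = -\sum_{j=0}^d \frac{a_j}{1-q^j t},$$
so $F^{-} = -F^{+}$, which yields both conclusions of the lemma.

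There is no serious obstacle here: the computation reflects the elementary fact that the Laurent expansions of the rational function $1/(1-q^j t)$ around $t=0$ and around $t=\infty$ are negatives of each other. The only point that requires care is Step 1, namely the verification that $P([n]_q)$ really does decompose as a $\QQ(q)$-linear combination of the $q^{jn}$ with coefficients independent of $n$; this is an immediate consequence of (\ref{coeur_eval}) and the polynomial nature of $P$.
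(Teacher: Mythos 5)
Your proposal is correct and follows essentially the same route as the paper: decompose $P$ in the basis $(1+qx-x)^j$ (your $1+(q-1)x$ is the same auxiliary variable), use \eqref{coeur_eval} to turn $P([n]_q)$ into a combination of $q^{jn}$, and sum the two geometric series, which are negatives of each other. The only cosmetic difference is that the paper reduces by linearity to a single power $(1+qx-x)^j$ before summing, whereas you carry the full linear combination through; the content is identical.
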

\begin{proof}
  Every such polynomial can be written as a finite sum
  \begin{equation*}
    \sum_{j} c_j (1+q x-x)^j,
  \end{equation*}
  for some coefficients $c_j$ in $\QQ(x)$.

  By linearity, it in enough to prove the lemma for the polynomial
  $(1+q x-x)^j$. In this case, using \eqref{coeur_eval}, one finds that
  $F^{+} = 1/(1-q^j t)$ and $F^{-}=q^{-j}t^{-1}/(1-q^{-j}t^{-1})$. The statement
  is readily checked.
\end{proof}

\section{Other properties}

\label{otherprop}

In this section, various general properties of the $q$-Ehrhart series
and the $q$-Ehrhart polynomials are described.

\subsection{Shifting the linear form}

Let $Q$ be a polytope. Let $s(Q)$ be the image of $Q$ by a translation
by a vector $v$ such that $\lambda(v)=N \geq 0$. The Positivity and
Genericity conditions still hold for $s(Q)$.

At the level of $q$-Ehrhart series, it is immediate to see that
\begin{equation}
  \ehr_{s(Q),\lambda}(t,q) = \ehr_{Q,\lambda} (q^N t,q).
\end{equation}
and that
\begin{equation}
  W_{\lambda}(n s(Q)) = q^{N n} W_{\lambda}(n Q).
\end{equation}

Using \eqref{coeur_eval}, one obtains that, at the level of $q$-Ehrhart polynomial, 
\begin{equation}
  L_{s(Q),\lambda} = (1+q x-x)^N L_{Q,\lambda}.
\end{equation}

\begin{remark}
  By using this kind of shift, one can always
  assume that $0 \in Q$.
\end{remark}

\subsection{Reversal of polytopes}

\label{reversal}

One defines here a duality on polytopes, depending on $\lambda$.

By the Genericity condition, there exists a unique vertex $v_{\max}
\in Q$ where $\lambda$ is maximal. Let us define a polytope $
\overline{Q}$ as $v_{\max} - Q$. It is therefore the image of $Q$ by
an integer affine map which exchanges $0$ and $v_{\max}$, hence $Q$
and $\overline{Q}$ are equivalent as lattice polytopes.

The Positivity and Genericity conditions still hold for
$\overline{Q}$.

In general, the pairs $(Q,\lambda)$ and $(\overline{Q},\lambda)$ are
not equivalent under the action of the integral affine group, but some
pairs $(Q,\lambda)$ can be isomorphic to their dual for this
duality. A necessary condition is that $0$ is in $Q$.

\begin{proposition}
  \label{prop_reversal}
  The effect of this duality on $q$-Ehrhart series is given by
  \begin{equation}
    \ehr_{\overline{Q},\lambda} = \ehr_{Q,\lambda} (t q^m, 1/q),
  \end{equation}
  where $m = \lambda(v_{\max})$ is the maximal value of $\lambda$ on $Q$.
\end{proposition}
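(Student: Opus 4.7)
The plan is to reduce the statement to a bijection between lattice points of $nQ$ and $n\overline{Q}$ induced by the involution $x \mapsto nv_{\max} - x$, and then track how $\lambda$ transforms under this map.

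First I would observe that $n\overline{Q} = nv_{\max} - nQ$, so the affine map $\sigma_n : x \mapsto nv_{\max} - x$ sends $nQ$ bijectively to $n\overline{Q}$. Since $v_{\max}$ is a vertex of the lattice polytope $Q$, it lies in $M$, so $\sigma_n$ preserves the lattice and hence restricts to a bijection between the lattice points of $nQ$ and those of $n\overline{Q}$. Under this bijection, $\lambda(\sigma_n(x)) = n\lambda(v_{\max}) - \lambda(x) = nm - \lambda(x)$.

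Next I would use this to compute the weighted sum on $n\overline{Q}$:
\begin{equation*}
W_{\lambda}(n\overline{Q},q) = \sum_{x \in nQ} q^{nm-\lambda(x)} = q^{nm} \sum_{x \in nQ} (1/q)^{\lambda(x)} = q^{nm}\, W_{\lambda}(nQ, 1/q).
\end{equation*}
Multiplying by $t^n$ and summing over $n \geq 0$ then gives
\begin{equation*}
\ehr_{\overline{Q},\lambda}(t,q) = \sum_{n \geq 0} W_{\lambda}(nQ,1/q) (q^m t)^n = \ehr_{Q,\lambda}(q^m t, 1/q),
\end{equation*}
which is exactly the claimed identity.

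There is no real obstacle: the only thing to verify carefully is that $\sigma_n$ preserves the lattice and hence induces a genuine bijection on lattice points, which follows because $v_{\max} \in M$ and $\sigma_n$ is a lattice affine involution. Everything else is a direct manipulation of the definitions of $W_{\lambda}$ and $\ehr$.
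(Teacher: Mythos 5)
Your proof is correct and follows essentially the same route as the paper: the identity $n\overline{Q}=nv_{\max}-nQ$ gives a lattice-point bijection sending a point of weight $q^{j}$ in $nQ$ to one of weight $q^{nm-j}$ in $n\overline{Q}$, from which the series identity follows. You have simply spelled out the computation of $W_{\lambda}(n\overline{Q},q)$ that the paper leaves implicit.
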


\begin{proof}
  One can see that $\overline{n Q}$ is just $n \overline{Q}$ for every
  $n \geq 0$. Therefore every point of weight $q^j$ in some $n Q$
  corresponds to a point of weight $q^{n m -j}$ in $n
  \overline{Q}$. This implies the statement.
\end{proof}

\begin{remark}
  The $q$-Ehrhart series of a polytope and its reversal are usually
  distinct, unless the polytope is self-dual. But they give the same
  classical Ehrhart series when $q=1$.
\end{remark}

\subsection{Many different pyramids}

\label{pyramides}

Let $Q$ be a lattice polytope in the lattice $M$.

Define a new polytope $\pyr(Q)$ in the lattice $\ZZ \times M$ as the
pyramid with apex $(1,0)$ based on $(0,Q)$. This is the convex hull of
the polytope $Q$ and a new vertex placed in a shifted parallel space.

Let us choose an integer $m \geq 0$ such that $m$ is not among the
values of $\lambda$ on $Q$. For example, one can always choose
$\max_Q(\lambda)+1$.

Let us extend the linear form $\lambda$ to a linear form $m \oplus\lambda$
on the lattice $\ZZ \times M$, whose value on a vector $(k,v)$ in $\ZZ
\times M$ is $k m + \lambda(v)$.

The Positivity and Genericity conditions still hold for $\pyr(Q)$ with
respect to $m \oplus\lambda$.

\begin{proposition}
  \label{pour_pyr}
  The $q$-Ehrhart series of $(\pyr(Q),m \oplus\lambda)$ is given by
  \begin{equation}
    \ehr_{\pyr(Q),m \oplus\lambda} = \ehr_{Q,\lambda} / ( 1- q^{m} t).
  \end{equation}
\end{proposition}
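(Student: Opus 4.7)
The plan is to compute the weighted sum $W_{m \oplus \lambda}(n\pyr(Q),q)$ by slicing $n\pyr(Q)$ along the first coordinate, then recognise the $q$-Ehrhart series as a Cauchy product.

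First I would unravel the definition of the dilation $n\pyr(Q)$. Since dilation commutes with taking convex hulls, $n\pyr(Q)$ is the convex hull of $(n,0)$ and $(0,nv)$ for vertices $v$ of $Q$. Parametrising a point of $n\pyr(Q)$ by its first coordinate $k$, a direct convex-combination argument shows that the slice at height $k$ is $\{k\}\times(n-k)Q$ for $0\le k\le n$. Hence the lattice points of $n\pyr(Q)$ are exactly the pairs $(k,w)$ with $k\in\{0,1,\dots,n\}$ and $w$ a lattice point of $(n-k)Q$, and the linear form gives $(m\oplus\lambda)(k,w)=km+\lambda(w)$.

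Next I would collect terms by the value of $k$ to obtain
\begin{equation*}
  W_{m\oplus\lambda}(n\pyr(Q),q)=\sum_{k=0}^{n} q^{km}\,W_{\lambda}((n-k)Q,q).
\end{equation*}
Multiplying by $t^n$, summing over $n\ge 0$ and re-indexing with $j=n-k$ turns the double sum into a product of two independent geometric-type sums:
\begin{equation*}
  \ehr_{\pyr(Q),m\oplus\lambda}(t,q)=\Bigl(\sum_{k\ge 0}(q^{m}t)^{k}\Bigr)\Bigl(\sum_{j\ge 0}W_{\lambda}(jQ,q)t^{j}\Bigr)=\frac{\ehr_{Q,\lambda}(t,q)}{1-q^{m}t},
\end{equation*}
which is the claimed identity.

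There is no real obstacle here: once the slice description of $n\pyr(Q)$ is established the result is a routine convolution. The only small point to check is that the hypotheses of Positivity and Genericity, together with the assumption that $m$ is not a value of $\lambda$ on $Q$, do hold for $(\pyr(Q),m\oplus\lambda)$, so that the $q$-Ehrhart series on the left-hand side is legitimately defined; this is immediate by inspecting the vertices $(1,0)$ and $(0,v)$ and the two types of edges (within the base, and joining the apex to a base vertex).
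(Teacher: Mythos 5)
Your proof is correct and follows essentially the same route as the paper: decompose the lattice points of $n\pyr(Q)$ according to the first coordinate $k$, so that the slice is $(n-k)Q$ weighted by $q^{km}$, and then recognise the resulting double sum as the Cauchy product of $\sum_{k}(q^m t)^k$ with $\ehr_{Q,\lambda}$. You supply slightly more detail than the paper (the explicit convex-combination argument for the slices, and the verification of Positivity and Genericity for $(\pyr(Q),m\oplus\lambda)$, which the paper asserts just before the proposition), but the argument is the same.
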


\begin{proof}
  Let us compute
    \begin{equation}
    \ehr_{\pyr(Q),m \oplus\lambda} = \sum_{n \geq 0} W_{m \oplus\lambda}(n \pyr(Q),q) t^n.
  \end{equation}
  By the definitions of $\pyr(Q)$ and $m \oplus\lambda$, this is
  \begin{equation}
    \sum_{n \geq 0} \sum_{i=0}^{n} q^{m i} W_{\lambda}((n-i) Q,q) t^n =
    \sum_{n \geq i \geq 0} q^{m i} t^i W_{\lambda}((n-i) Q,q) t^{n-i},
  \end{equation}
  which can be rewritten as the expected result.
\end{proof}

\subsection{Periodicity of values at cyclotomic $q$}

\label{period_cyclotomic}

Let $N$ be an integer such that $N > \max_Q(\lambda)$, and let
$\xi$ be a primitive root of unity of order $N$.

By theorem \ref{ehrhart_poly}, one can let $q=\xi$ in the
$q$-Ehrhart polynomial $L_{Q,\lambda}$.

\begin{proposition}
  The sequence of values $L_{Q,\lambda}([n]_q)|_{q=\xi}$ for $n \in
  \ZZ$ is periodic of period $N$.
\end{proposition}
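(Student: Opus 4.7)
The plan is to reduce the statement to the simple observation that the sequence of $q$-integers $[n]_q$ itself becomes $N$-periodic in $n$ when $q$ is specialised to a primitive $N$-th root of unity.

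First I would verify that the substitution $q = \xi$ makes sense in $L_{Q,\lambda}$. By Theorem \ref{ehrhart_poly}, the coefficients of $L_{Q,\lambda}$ lie in $\QQ(q)$ and have poles only at roots of unity of order strictly less than $m = \max_Q(\lambda)$. Since we assumed $N > m$, the primitive $N$-th root $\xi$ is not a pole of any coefficient, so $L_{Q,\lambda}(x)\big|_{q=\xi}$ is a well-defined polynomial in $x$ with coefficients in $\QQ(\xi)$.

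Next I would compute directly from $[n]_q = (q^n-1)/(q-1)$: for any $n \in \ZZ$,
\begin{equation*}
  [n+N]_\xi = \frac{\xi^{n+N}-1}{\xi-1} = \frac{\xi^n \cdot \xi^N - 1}{\xi-1} = \frac{\xi^n - 1}{\xi-1} = [n]_\xi,
\end{equation*}
using $\xi^N=1$. Feeding this into the polynomial $L_{Q,\lambda}(x)\big|_{q=\xi}$ evaluated at $x=[n]_\xi$ immediately yields $L_{Q,\lambda}([n+N]_q)\big|_{q=\xi} = L_{Q,\lambda}([n]_q)\big|_{q=\xi}$, which is the claimed $N$-periodicity.

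As a sanity check one can also argue through the explicit expression $L_{Q,\lambda}(x) = \sum_{j=0}^{m} c_j (1+qx-x)^j$ from the proof of Theorem \ref{ehrhart_poly}: combined with $(1+qx-x)|_{x=[n]_q} = q^n$, evaluation at $x=[n]_q$ and $q = \xi$ produces $\sum_{j=0}^{m} c_j(\xi)\,\xi^{nj}$, which is manifestly invariant under $n \mapsto n+N$. There is really no obstacle here; the proposition is a direct consequence of the periodicity of $q^n$ at cyclotomic arguments together with the fact, guaranteed by Theorem \ref{ehrhart_poly}, that $L_{Q,\lambda}$ is regular at $q=\xi$ whenever $N>m$.
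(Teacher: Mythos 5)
Your proof is correct and takes essentially the same route as the paper, whose entire argument is the one-line observation that the sequence $[n]_q$ is itself $N$-periodic at $q=\xi$; you simply make explicit the preliminary check (via Theorem \ref{ehrhart_poly} and $N>m$) that $\xi$ is not a pole of the coefficients, which the paper leaves implicit.
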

\begin{proof}
  Indeed, the sequence $[n]_q$ itself is periodic of period $N$.
\end{proof}

Assume now that there is no lattice point in $\int(n Q)$ for some
integer $n$. By $q$-Ehrhart reciprocity, one has
$L_{Q,\lambda}([-n]_q)=0$. By the previous proposition, one deduces
that
\begin{equation}
  L_{Q,\lambda}([-n+k N]_q)|_{q=\xi} = 0,
\end{equation}
for all $k \in \ZZ$. This means that the cyclotomic polynomial
$\Phi_N$ divides the value $L_{Q,\lambda}([-n+k N]_q)$ for all $k \in
\ZZ$. 

This construction provides many cyclotomic factors in the values of
some $q$-Ehrhart polynomials.

\section{Posets and $P$-partitions}

\label{poset}

There exists a well-known theory of $P$-partitions, due to R. Stanley
\cite{stanley_ordered}, which describes decreasing colourings of
partially ordered sets (see also \cite{feray_reiner}). Part of this
theory, namely its restriction to natural labellings, coincides exactly
with a special case of the theory developed here, namely its
application to the order polytope of the opposite of a poset. The
theory of $P$-partitions does not include any analog of Ehrhart
polynomials.

This section describes this common special case, and some specific
properties of $q$-Ehrhart series and $q$-Ehrhart polynomials for
posets.

\medskip

Let $P$ be a finite poset. The order polytope $Q_P$ of the poset $P$
is a lattice polytope in $\ZZ^P$ (with coordinates $z_x$ for $x \in
P$), defined by the inequalities
\begin{align*}
  0 \leq z_x \leq 1 &\quad \forall\, x \in P,\\
  z_x \leq z_y &\quad \text{if}\quad x \leq y \in P.
\end{align*}

The polytope $Q_P$ has vertices in $\ZZ^{\{0,1\}}$ and no interior
lattice point \cite{stanley_2p}.

Points in the dilated polytope $n Q_P$ correspond to increasing colourings of the
elements of $P$ by the integers in $\{0,\dots,n\}$.

In this section, the linear form $\lambda$ will always be given by the
sum of coordinates. The Positivity condition is clearly satisfied by
$Q_P$ and this linear form. One can also check that the Genericity
condition holds, by using the known description of the vertices and
edges of the order polytopes \cite{stanley_2p}. The minimal and
maximal values of $\lambda$ on $Q_P$ are $0$ and the cardinality of
$P$.

For short, one will denote $\ehr_P$ and $L_P$ for the $q$-Ehrhart
series and polynomial of $Q_P$.

According to \cite[\S 8]{stanley_ordered}, the $q$-Ehrhart series
$\ehr_P$ can be written
\begin{equation}
  \ehr_P = \frac{W_P}{(1- t)(1-q t)\dots(1-q^{\#P} t)},
\end{equation}
where $W_P$ is a polynomial in $q$ and $t$ with nonnegative integer
coefficients. This polynomial has a known combinatorial
interpretation, using descents and major indices, as a sum over all
linear extensions of the poset $P$.

From the general existence of the $q$-Ehrhart polynomial for polytopes
(theorem \ref{ehrhart_poly}), one deduces 

\begin{proposition}
  There exists a polynomial $L_P$, of degree $\#P$, such that
  $L_P([n]_q)$ is the weighted sum over increasing colourings of $P$ by
  $\{0,\dots,n\}$, where the weight is $q$ to the power the sum of
  colours.
\end{proposition}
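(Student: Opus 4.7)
The plan is to deduce this as an immediate application of Theorem~\ref{ehrhart_poly} to the pair $(Q_P,\lambda)$, where $\lambda$ is the sum of coordinates. As recalled in the paragraph preceding the statement, the Positivity condition holds because every vertex of $Q_P$ lies in $\{0,1\}^P$, and the Genericity condition follows from the description of edges of order polytopes in \cite{stanley_2p}: along an edge the coordinates change by $\pm 1$ on a nonempty set of indices, so $\lambda$ takes distinct values at the two endpoints. Theorem~\ref{ehrhart_poly} then directly supplies a polynomial $L_{Q_P,\lambda}(x) \in \QQ(q)[x]$, which I define to be $L_P$, satisfying
\begin{equation*}
L_P([n]_q) = W_\lambda(n Q_P,q) \qquad \text{for every } n \in \ZZ_{\geq 0}.
\end{equation*}

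The second step is to identify $W_\lambda(n Q_P,q)$ with the combinatorial sum appearing in the statement. Unwinding the defining inequalities of $Q_P$, a lattice point $z \in n Q_P \cap \ZZ^P$ is exactly a map $z : P \to \{0,1,\dots,n\}$ with $z_x \leq z_y$ whenever $x \leq y$, i.e.\ an increasing colouring of $P$ by $\{0,\dots,n\}$. Because $\lambda$ is the sum of coordinates, $\lambda(z) = \sum_{x \in P} z_x$ is precisely the sum of the colours, and therefore
\begin{equation*}
W_\lambda(n Q_P,q) = \sum_{z} q^{\sum_{x \in P} z_x},
\end{equation*}
the sum running over all increasing colourings $z : P \to \{0,\dots,n\}$. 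This is the weighted sum required by the proposition.

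For the degree assertion, Theorem~\ref{ehrhart_poly} tells us that $\deg L_P = \max_{Q_P}(\lambda)$. The maximum of the sum of coordinates on $Q_P$ is attained at the vertex $(1,1,\dots,1)$ (i.e.\ the constant colouring by $1$) and equals $\#P$, as already noted in the discussion above the statement. Hence $\deg L_P = \#P$.

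There is no genuine obstacle in this argument: the proposition is essentially a translation of Theorem~\ref{ehrhart_poly} into the language of $P$-partitions. The only substantive point to verify is the Genericity condition for $(Q_P,\lambda)$, but this is classical once the edges of $Q_P$ are described combinatorially, and it was already asserted in the exposition preceding the statement.
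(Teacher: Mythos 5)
Your proposal is correct and follows exactly the paper's route: the paper likewise presents this proposition as an immediate specialization of Theorem~\ref{ehrhart_poly} to $(Q_P,\lambda)$ with $\lambda$ the sum of coordinates, relying on the identification of lattice points of $nQ_P$ with increasing colourings and on $\max_{Q_P}(\lambda)=\#P$ for the degree. One small imprecision: ``the coordinates change by $\pm 1$ on a nonempty set of indices'' does not by itself force $\lambda$ to change --- you need the fact from \cite{stanley_2p} that an edge of $Q_P$ joins indicator vectors of two \emph{comparable} filters, so all coordinate changes have the same sign and the sum of coordinates strictly changes.
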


From the $q$-Ehrhart reciprocity (theorem \ref{ehrhart_reciprocity}), one obtains 

\begin{proposition}
  For every integer $n\geq $, the polynomial $(-1)^{\# P} L_P([-n]_q)$
  is the weighted sum over strictly increasing colourings of $P$ by
  $\{1,\dots,n-1\}$, where the weight is $q$ to the power the sum of
  colours.
\end{proposition}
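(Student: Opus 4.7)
The plan is to apply Theorem \ref{ehrhart_reciprocity} directly to the pair $(Q_P,\lambda)$, where $\lambda$ is the sum of coordinates. Two ingredients must be assembled: the dimension $d$ of $Q_P$, and a combinatorial description of the lattice points in $\int(nQ_P)$.

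First I would record that $Q_P$ is full-dimensional in $\mathbb{R}^P$, so that $d = \#P$; this follows at once from the fact that $Q_P$ contains, for any linear extension $x_1<\dots<x_{\#P}$, the simplex of points with $0\leq z_{x_1}\leq\dots\leq z_{x_{\#P}}\leq 1$, which is of full dimension. Then I would translate the defining inequalities of $Q_P$, namely $0\leq z_x\leq 1$ and $z_x\leq z_y$ for $x\leq y$, into the defining inequalities of its interior by making each one strict. After dilating by $n$, the interior lattice points of $nQ_P$ are exactly the functions $c\colon P\to\{1,\dots,n-1\}$ satisfying $c(x)<c(y)$ whenever $x<y$ in $P$, that is, strictly increasing colourings.

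With these two identifications in place, Theorem \ref{ehrhart_reciprocity} yields
\begin{equation*}
L_P([-n]_q) \;=\; (-1)^{\#P}\, W_\lambda(\int(nQ_P),1/q) \;=\; (-1)^{\#P}\sum_{c}\, q^{-\sum_{x\in P} c(x)},
\end{equation*}
where the sum runs over strictly increasing colourings $c\colon P\to\{1,\dots,n-1\}$. Multiplying both sides by $(-1)^{\#P}$ gives the desired formula for $(-1)^{\#P}L_P([-n]_q)$ as a weighted sum over such strict colourings, with the weight being the corresponding power of $q$ (up to the substitution $q\mapsto 1/q$ built into the reciprocity theorem).

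The argument is essentially a specialisation of the general reciprocity theorem to the order polytope, so there is no serious obstacle; the only point requiring care is the passage from the inequalities defining $Q_P$ to those defining $\int(nQ_P)$, since one must be sure that strict inequalities are preserved on lattice points and that no additional hidden inequalities are inherited from the ambient box (they are not, because the strict versions $0<z_x<n$ are already implied by the chain constraints once $P$ has a minimum and maximum, and are imposed directly otherwise).
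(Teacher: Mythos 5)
Your proof is correct and follows exactly the route the paper intends: the paper gives no argument beyond "apply Theorem \ref{ehrhart_reciprocity}", and you supply the two details that make this work (full-dimensionality of $Q_P$, so $d=\#P$, and the identification of interior lattice points of $nQ_P$ with strict colourings by $\{1,\dots,n-1\}$). The residual $q\mapsto 1/q$ substitution you flag at the end is genuine, but it is an imprecision in the proposition's wording rather than a gap in your argument, since the reciprocity theorem produces $W_{\lambda}(\int(nQ_P),1/q)$, i.e.\ the weight $q^{-\sum_x c(x)}$, as your single-element example would confirm.
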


One can find in \cite[Prop. 10.4]{stanley_ordered} a reciprocity
formula for the $q$-Ehrhart series, closely related to the previous
proposition.

\begin{figure}[h!]
  \centering
  \includegraphics[height=2cm]{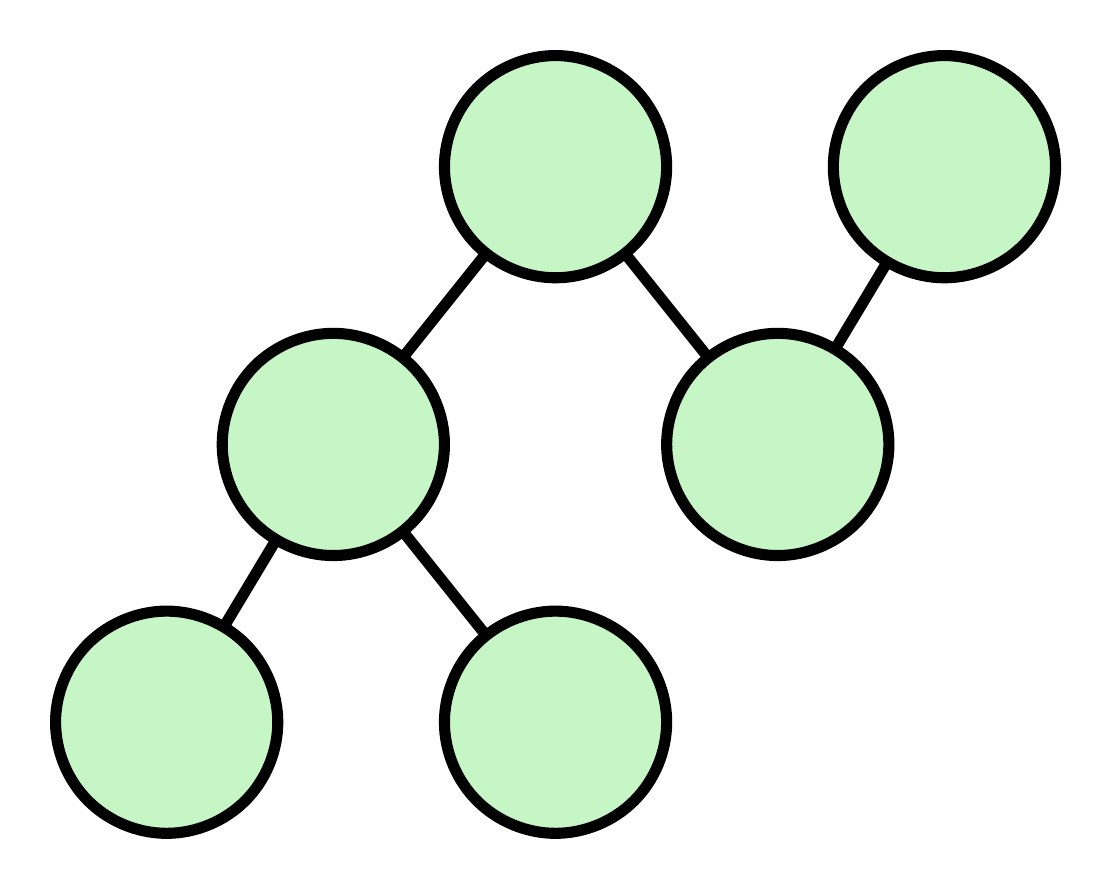}
  \caption{A poset $P$ on $6$ vertices, minima at the bottom}
  \label{fig:exemple_poset}
\end{figure}

\begin{figure}[h!]
  \centering
  \includegraphics[height=2.5cm]{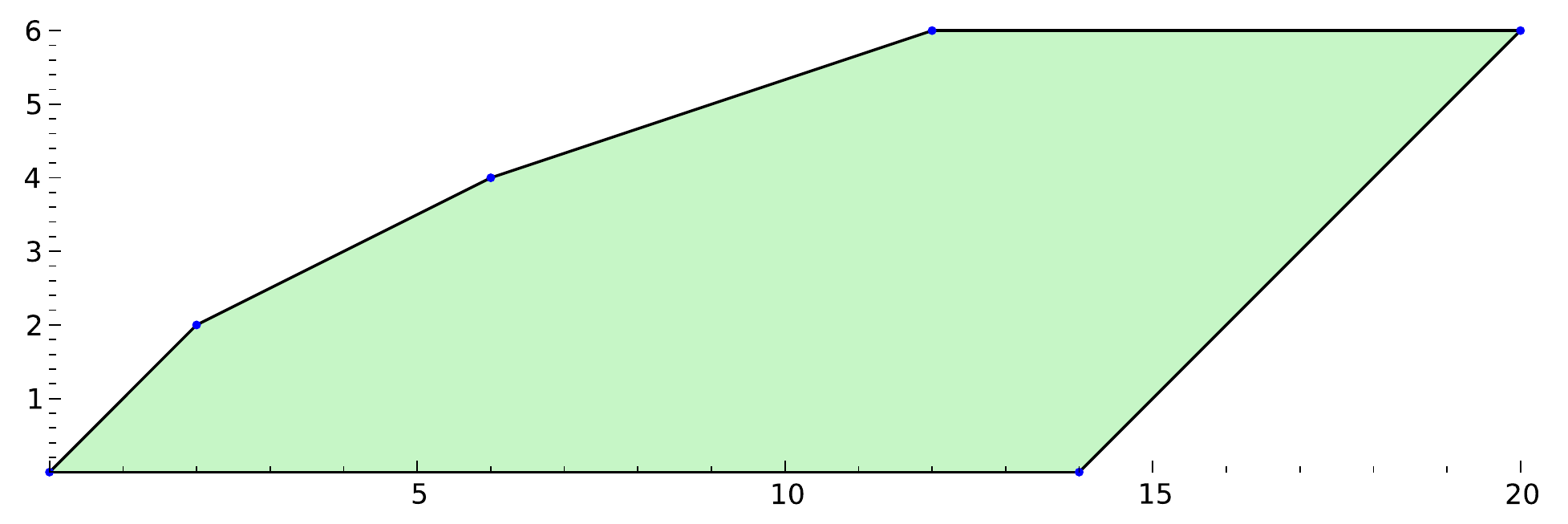}
  \caption{Newton polytope of the numerator of $L_P$}
  \label{fig:newton_poset}
\end{figure}

Based on experimental observations, one proposes the following
conjecture, illustrated in figure \ref{fig:newton_poset}.

\begin{conjecture}
  The Newton polytope of the numerator of the $q$-Ehrhart polynomial
  $L_P$ has the following shape. It has an horizontal top edge, an
  horizontal bottom edge and a diagonal right edge. Every element $x$
  of the poset $P$ gives rise to a segment on the left border with
  inverse slope given by the length of the maximal chain of elements
  larger than $x$.
\end{conjecture}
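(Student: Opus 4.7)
The plan is to work from an explicit formula for $L_P$ obtained via partial fractions, together with the combinatorial interpretation of the numerator of $\ehr_P$. Recall from Stanley's theory and the formula given in the excerpt that
\[
\ehr_P(t,q) = \frac{W_P(q,t)}{\prod_{i=0}^{\#P}(1-q^i t)},
\]
with $W_P$ a polynomial with non-negative integer coefficients (a sum over linear extensions weighted by descents and major index). Partial fraction decomposition yields an expression
\[
\ehr_P(t,q) = \sum_{j=0}^{\#P} \frac{c_j(q)}{1-q^j t}, \qquad c_j(q) = \frac{W_P(q,q^{-j})}{\prod_{i\neq j}(1-q^{i-j})},
\]
and, following the proof of Theorem \ref{ehrhart_poly}, $L_P(x) = \sum_{j=0}^{\#P} c_j(q)(1+(q-1)x)^j$. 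Clearing denominators writes $L_P(x) = N_P(q,x)/D_P(q)$ with $N_P \in \ZZ[q,x]$, whose support is the object of interest.

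Next, I would verify the ``easy'' parts of the polytope. The bottom-left vertex is $(0,0)$ because $L_P(0) = W_\lambda(0 \cdot Q_P,q) = 1$, which contributes a nonzero constant to $N_P$ after clearing. The top-right vertex should be located at $(\#P + \deg_q D_P,\#P)$, coming from the leading $x$-coefficient $c_{\#P}(q)(q-1)^{\#P}$: the unique maximal lattice point of $nQ_P$ contributes $q^{n\#P}$ to $W_\lambda(nQ_P,q)$, and comparing highest $q$-degrees on both sides of $L_P([n]_q) = W_\lambda(nQ_P,q)$ forces this leading-coefficient degree. The diagonality of the right edge (slope $1$) should then follow by analysing, for each $0 \le k \le \#P$, the \emph{maximal} $q$-degree in the coefficient of $x^k$: it grows by exactly one per unit of $k$ because the unique maximal vertex contributes $q^n$ each time $n$ increases, forcing a single diagonal of lattice points $(\deg_q D_P + k,k)$ on the lower-right envelope. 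Horizontality of the bottom edge should follow from an analogous analysis using the unique minimal vertex $0 \in Q_P$.

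The main obstacle is the left border. The conjecture predicts that the slope breaks occur precisely at the points corresponding to the elements of $P$, with inverse slope equal to the length of the longest chain above each element. I would approach this in two ways. First, try to write $N_P$ as a positive or signed sum over combinatorial objects --- naturally, pairs $(\sigma,S)$ where $\sigma$ is a linear extension and $S$ a subset that records a chain in $P$ --- and show that the extremal points of the upper-left envelope are attained by objects indexed by maximal chains anchored at each element. Second, argue by induction on $\#P$: removing a maximal element $x$ relates $L_P$ to $L_{P\setminus\{x\}}$ via a \emph{pyramid}-style identity (in the spirit of Proposition \ref{pour_pyr}), which changes the left border by prepending a single segment whose inverse slope equals the longest chain above $x$. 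The key difficulty is that removing a non-isolated element of $P$ does not correspond to a clean geometric operation on $Q_P$, so this inductive step requires a delicate comparison of partial fraction expansions, and showing that all combinatorial contributions below the claimed upper envelope are indeed strictly dominated is likely to be the technical heart of the proof.
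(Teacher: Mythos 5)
This statement appears in the paper only as a conjecture, introduced with ``Based on experimental observations, one proposes the following conjecture''; the paper gives no proof, so there is no argument of record to compare yours against. Judged on its own terms, your proposal is a programme rather than a proof, and what it leaves open is precisely the substantive content of the conjecture. The setup --- partial fractions for $\ehr_P$, the expansion $L_P=\sum_j c_j(q)(1+qx-x)^j$, and the identification of the extreme vertices and of the right and bottom edges of the Newton polytope --- is a sensible starting point, but even these ``easy'' parts are only sketched: you do not address why the \emph{top} edge is horizontal, nor why no cancellation in $\ZZ[q,x]$ after clearing the denominator $D_P(q)$ (whose own $q$-support shifts every row) could destroy the claimed boundary lattice points.

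The heart of the conjecture, however, is the left border: that its slope breaks are indexed by the elements of $P$, with the segment attached to $x\in P$ having inverse slope equal to the length of the longest chain above $x$. For this you offer two candidate strategies and explicitly concede that neither is carried out. The first requires a precise signed combinatorial expansion of the numerator together with a domination statement for its upper-left envelope, which you do not formulate. The second requires an induction step that removes a maximal element of $P$; as you note yourself, this does not correspond to any geometric operation available in the paper --- the pyramid constructions of \S\ref{pyramides} and their poset versions only adjoin a new global minimum or maximum, not an arbitrary maximal element --- so the claimed ``pyramid-style identity'' relating $L_P$ to $L_{P\setminus\{x\}}$ does not exist in the generality you need. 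Until one of these strategies is actually executed, the conjecture remains unproved; what you have is a plausible plan of attack with the genuinely hard step still missing.
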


For the trivial poset with just one element, the Ehrhart polynomial is $1+q x$.

\begin{example}
  Let $P$ be the partial order on the set $\{a,b,c,d\}$, where $a$ is
  smaller than $b,c,d$. Then the Ehrhart polynomial $L_P$ is $q x + 1$ times 
  \begin{equation} 
    \frac{(q^2 x + q + 1)(\Phi_3 \Phi_4 + q (2 q^4 + 4 q^2 + q + 2) x + q^2 (q^4 - q^3 + 3 q^2 - q + 1) x^2)}{\Phi_2 \Phi_3 \Phi_4},
  \end{equation}
  where $\Phi_i$ is the cyclotomic polynomial of order $i$ in the variable $q$.

  For the opposite poset, one finds instead
  \begin{equation} 
    \frac{(q x + 1)(q^2 x + q + 1)(\Phi_3 q^4 x^2 + (2 q^4 + 2 q^3  + 3 q + 2 ) q^2 x + \Phi_4 \Phi_3)}{\Phi_2 \Phi_3 \Phi_4}.
  \end{equation}
\end{example}

\subsection{Value at infinity}

Given a poset $P$, one can evaluate the $q$-Ehrhart polynomial $L_P$
at the limit (as a formal power series in $q$) of $q$-integers $[n]_q$
when $n$ becomes infinity, namely at $1/(1-q)$. This gives a rational
function in $q$, which corresponds to the weighted sum of all
increasing colourings of $P$.

For example, for the partial order on $\{a,b,c,d\}$ with $a\leq
b,c,d$, one gets
\begin{equation}
  \frac{1}{(q - 1)^4 \Phi_2 \Phi_4},
\end{equation}
and for the opposite poset, one obtains
\begin{equation}
  \frac{q^4 - q^3 + 3 q^2 - q + 1}{(q - 1)^4 \Phi_2 \Phi_3 \Phi_4}.
\end{equation}

Let us compare this value to a limit at $t=1$ of the $q$-Ehrhart
series.

\begin{proposition}
  \label{n_infini}
  The value of $L_p$ at $x = 1/(1-q)$ is also the value at $t=1$ of
  the product $(1-t) \ehr_P$.
\end{proposition}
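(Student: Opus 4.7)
The plan is to exploit the partial fraction decomposition of $\ehr_P$ that was produced in the proof of Theorem \ref{ehrhart_poly}. By Proposition \ref{poles_simples} together with the construction in that proof, one can write
\begin{equation*}
  \ehr_P(t,q) = \sum_{j=0}^{\#P} \frac{c_j}{1 - q^j t}, \quad c_j \in \QQ(q),
\end{equation*}
and then $L_P(x) = \sum_{j=0}^{\#P} c_j (1 + (q-1)x)^j$. The whole proof will reduce to showing that both sides of the proposition are equal to $c_0$.

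First I would evaluate $L_P$ at $x = 1/(1-q)$. The core observation is the elementary identity
\begin{equation*}
  \bigl(1 + (q-1)x\bigr)\Big|_{x = 1/(1-q)} = 1 + \frac{q-1}{1-q} = 0,
\end{equation*}
so that $(1 + (q-1)x)^j$ vanishes at $x = 1/(1-q)$ for every $j \geq 1$, and equals $1$ for $j = 0$. Therefore only the $j=0$ term survives, giving $L_P(1/(1-q)) = c_0$. Note that by Proposition \ref{min_et_max} the factor $1-t$ really is present in the denominator, so that the term $c_0/(1-t)$ with (generically) nonzero $c_0$ is well defined.

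Second I would compute $\lim_{t \to 1}(1-t)\ehr_P(t,q)$ termwise from the same partial fraction decomposition. For $j=0$, $(1-t) \cdot 1/(1-t) = 1$. For $j \geq 1$, the rational function $1/(1-q^j t)$ is regular at $t=1$ (viewed in $\QQ(q)(t)$, since $1-q^j$ is a nonzero element of $\QQ(q)$), so $(1-t)/(1-q^j t)$ vanishes at $t=1$. Summing, $\lim_{t\to 1}(1-t)\ehr_P(t,q) = c_0$, matching the value obtained in the previous step.

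There is no real obstacle beyond bookkeeping: the identity $1 + (q-1)/(1-q) = 0$ is what makes the polynomial $L_P$ collapse to its constant-in-$j$ part, and the same pole at $t=1$ governs the asymptotic behaviour of $\ehr_P$. One could equivalently phrase the argument without the partial fraction decomposition by taking the Abelian limit $\lim_{t \to 1}(1-t)\sum_n L_P([n]_q) t^n$ and using that $[n]_q \to 1/(1-q)$ as $n \to \infty$ in the $q$-adic topology, but the decomposition above makes the identity transparent and avoids any convergence issue since we are working in $\QQ(q)(t)$.
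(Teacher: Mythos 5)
Your proof is correct, but it takes a genuinely different route from the paper's. You argue algebraically: starting from the partial fraction decomposition $\ehr_P = \sum_j c_j/(1-q^j t)$ and the corresponding formula $L_P(x) = \sum_j c_j (1+(q-1)x)^j$ from the proof of Theorem \ref{ehrhart_poly}, you observe that $1+(q-1)x$ vanishes at $x=1/(1-q)$ while $(1-t)/(1-q^j t)$ vanishes at $t=1$ for $j\geq 1$, so both sides collapse to the single coefficient $c_0$. The paper instead argues geometrically: it identifies $L_P(1/(1-q))$ as the limit of the weighted sums $W_\lambda(nQ_P,q)$, i.e.\ the weighted generating series of the cone over the order polytope, and identifies the value of $(1-t)\ehr_P$ at $t=1$ as the telescoping sum of the differences $L_P([n]_q)-L_P([n-1]_q)$, which converges to the same cone series. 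Your version has the advantage of being entirely formal in $\QQ(q)(t)$ (no $q$-adic convergence to justify, which the paper leaves implicit), and it makes transparent that the common value is exactly the residue-type coefficient $c_0$ attached to the pole at $t=1$; the paper's version buys the geometric interpretation of that common value as a sum over the cone of the poset. Your side remark that the $j=0$ factor is genuinely present by Proposition \ref{min_et_max} (since $\min_{Q_P}\lambda=0$) is correct but not actually needed: if $c_0$ happened to vanish, both sides would simply equal $0$.
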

\begin{proof}
  Indeed, the value of $L_p$ at $x = 1/(1-q)$ is the limit of the
  weighted sums over the dilated order polytopes of the poset
  $P$. This is just a weighted sum over the cone defined by the poset
  $P$.
  
  On the other hand, the series $\ehr_P$ has a simple pole at $t=1$,
  hence the product $(1-t) \ehr_P$ has a well-defined value at
  $t=1$. The coefficients of the series $(1-t) \ehr_P$ are the
  differences $L_p([n]_q)-L_P([n-1]_q)$. Their sum is also the
  weighted generating series of the cone associated with the poset
  $P$.
\end{proof}

\subsection{Volume}

\begin{remark}
  \label{dual_poset}
  Let us note that the order polytope $Q_{\overline{P}}$ for the
  opposite $\overline{P}$ of a poset $P$ is the reversal of the order
  polytope $Q_p$, as defined in section \ref{reversal}.
\end{remark}

Therefore, by proposition \eqref{prop_reversal}, one has
\begin{equation}
  \label{Pbar_P}
  \ehr_{\overline{P}} = \ehr_P (t q^{\# P}, 1/q).
\end{equation}

Let
\begin{equation}
  \qbinom{n}{m}
\end{equation}
denote the $q$-binomial coefficients.

\begin{lemma}
  \label{q_binomial}
  Let $d$ be a nonnegative integer. Then
  \begin{equation}
    \frac{1}{\prod_{j=0}^{d} 1-q^j t} = \sum_{n\geq 0} \qbinom{d + n}{n} t^n.
  \end{equation}
  The coefficient of $t^n$ is the value of the polynomial
  \begin{equation}
    \prod_{j=1}^{d}\frac{[j]_q+q^j x}{[j]_q}
  \end{equation}
  at the $q$-integer $[n]_q$.
\end{lemma}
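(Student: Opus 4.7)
The plan is to prove the two assertions separately. The first identity is a classical $q$-series formula (a case of the $q$-binomial theorem), and the second is a direct consequence of the key identity \eqref{coeur_eval}.

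For the generating function identity, I would argue by induction on $d$. Let $f_d(t) = \sum_{n\geq 0} \qbinom{d+n}{n} t^n$. The base case $d=0$ is immediate since $\qbinom{n}{n}=1$ gives $f_0(t) = 1/(1-t)$. For the inductive step, I would invoke the $q$-Pascal recursion
\begin{equation*}
  \qbinom{d+n}{n} = \qbinom{d+n-1}{n} + q^{d}\,\qbinom{d+n-1}{n-1},
\end{equation*}
with the convention $\qbinom{d-1}{-1}=0$. Summing against $t^n$ and splitting the two pieces yields the functional equation $f_d(t) = f_{d-1}(t) + q^d t\, f_d(t)$, i.e.\ $(1-q^d t) f_d(t) = f_{d-1}(t)$. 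Iterating gives the claimed product formula $f_d(t) = \prod_{j=0}^d 1/(1-q^j t)$.

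For the interpolation statement, I would start from the product form of the $q$-binomial coefficient,
\begin{equation*}
  \qbinom{d+n}{n} = \prod_{j=1}^{d} \frac{[n+j]_q}{[j]_q},
\end{equation*}
and simply rewrite each factor $[n+j]_q$ in terms of $[n]_q$. Using \eqref{coeur_eval} in the form $q^n = 1 + (q-1)[n]_q$, one computes
\begin{equation*}
  [n+j]_q = \frac{q^{n+j}-1}{q-1} = \frac{q^j(1+(q-1)[n]_q)-1}{q-1} = [j]_q + q^j [n]_q,
\end{equation*}
which is exactly the evaluation of the affine polynomial $[j]_q + q^j x$ at $x=[n]_q$. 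Substituting this into the product yields the desired polynomial evaluation.

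I expect no serious obstacle: both steps are short manipulations. The only point that deserves care is recording the correct form of the $q$-Pascal recursion (there are two, and the recursion needed here is the one that pulls out the factor $q^d$, not $q^n$), since this is what allows the induction to telescope into the claimed product with exponents $0,1,\dots,d$.
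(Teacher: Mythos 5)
Your proof is correct and follows essentially the same route as the paper, which simply states that the first identity follows by an easy induction on $d$ and that the second statement is then clear; you have filled in the details (the correct $q$-Pascal recursion $\qbinom{d+n}{n} = \qbinom{d+n-1}{n} + q^{d}\qbinom{d+n-1}{n-1}$ and the rewriting $[n+j]_q = [j]_q + q^j[n]_q$). Both computations check out.
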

\begin{proof}
  The first equation is classical and can be proved by an easy
  induction on $d$, using the definition of the $q$-binomial
  coefficients. The second statement is then clear.
\end{proof}

\begin{lemma}
  \label{shift_interpol}
  Let $f$ be a polynomial in $\QQ(q)[x]$ of degree $d$. Let $g$ be the
  polynomial $f(1+q x)$. Then $g([n]_q)= f([n+1]_q)$ for every integer
  $n$. The leading coefficient of $g$ is $q^d$ times the leading
  coefficient of $f$.
\end{lemma}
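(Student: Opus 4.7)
The plan is to reduce the lemma to the single algebraic identity $1 + q[n]_q = [n+1]_q$, from which both conclusions follow by direct substitution. This identity is essentially the content of \eqref{coeur_eval} rephrased: substituting $x = [n]_q$ into $1 + qx - x$ gives $q^n$, so $1 + qx$ at $x = [n]_q$ is $1 + q[n]_q = 1 + q(1 + q + \cdots + q^{n-1}) = 1 + q + \cdots + q^n = [n+1]_q$. Alternatively, a one-line check starting from $[n]_q = (q^n - 1)/(q-1)$ gives
\begin{equation*}
  1 + q[n]_q = \frac{(q - 1) + q^{n+1} - q}{q - 1} = \frac{q^{n+1} - 1}{q - 1} = [n+1]_q.
\end{equation*}
Note that this derivation is purely formal in $q$, so it is valid for every integer $n$, positive, zero, or negative.

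Granted this identity, the first assertion is immediate: by definition of $g$,
\begin{equation*}
  g([n]_q) = f\bigl(1 + q[n]_q\bigr) = f([n+1]_q).
\end{equation*}

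For the leading coefficient, write $f(x) = a_d x^d + \sum_{i < d} a_i x^i$ with $a_d$ the leading coefficient in $\QQ(q)$. Then
\begin{equation*}
  g(x) = f(1 + qx) = a_d (1 + qx)^d + \sum_{i < d} a_i (1 + qx)^i,
\end{equation*}
and only the first summand contributes a monomial of degree $d$. Expanding $(1 + qx)^d$, its leading term is $q^d x^d$, so the leading coefficient of $g$ is $q^d a_d$, as claimed.

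There is no real obstacle here; the lemma is essentially a bookkeeping statement that records how the change of variable $x \mapsto 1 + qx$ interacts with evaluation at $q$-integers. The only thing worth checking carefully is that the identity $1 + q[n]_q = [n+1]_q$ holds for all integers $n$ (including negative ones), which it does because the derivation above uses only the rational-function expression for $[n]_q$.
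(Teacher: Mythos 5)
Your proof is correct and fills in exactly the routine verification that the paper dismisses with ``Obvious'': the identity $1+q[n]_q=[n+1]_q$ (valid for all integers $n$ via the rational expression for $[n]_q$) gives the evaluation claim, and the expansion of $(1+qx)^d$ gives the leading coefficient. This is the same (and really the only) natural route.
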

\begin{proof}
  Obvious.
\end{proof}

Let 
\begin{equation*}
  [n]!_q = [1]_q [2]_q \cdots [n]_q
\end{equation*}
be the $q$-factorial of $n$.

\begin{definition}
  The $q$-volume of a poset $P$ is the leading coefficient of the
  $q$-Ehrhart polynomial $L_{P}$ times the $q$-factorial of $\#P$.
\end{definition}

For example, for the partial order on $\{a,b,c,d\}$ with $a\leq
b,c,d$, the $q$-volume is
\begin{equation}
  q^5 (q + 1) (q^4 - q^3 + 3 q^2 - q + 1),
\end{equation}
and for opposite poset, it is given by
\begin{equation}
  q^7 (q + 1) (q^2 + q + 1).
\end{equation}

\begin{proposition}
  The $q$-volume of $P$ is equal to $q^{\binom{\#P+1}{2}}$ times
  the value at $t=1$ and $q=1/q$ of the numerator of the $q$-Ehrhart
  series of the opposite poset $\overline{P}$.
\end{proposition}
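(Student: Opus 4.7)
The plan is to identify the $q$-volume of $P$ directly from the partial fraction expansion of $\ehr_P$ used in the proof of Theorem \ref{ehrhart_poly}, and then translate the resulting expression via equation \eqref{Pbar_P}.

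Set $d = \#P$ and write $\ehr_P = W_P(t,q)/\prod_{j=0}^d (1-q^j t)$. Following the proof of Theorem \ref{ehrhart_poly}, decompose $\ehr_P = \sum_{j=0}^d c_j/(1-q^j t)$ with $c_j \in \QQ(q)$, so that
\[
L_P(x) = \sum_{j=0}^d c_j \bigl((q-1)x + 1\bigr)^j.
\]
Since $\bigl((q-1)x+1\bigr)^j$ has degree $j$ in $x$, only the term $j=d$ contributes to the coefficient of $x^d$, which is therefore $c_d(q-1)^d$. By Proposition \ref{min_et_max} the factor $1-q^d t$ is always present in the denominator of $\ehr_P$, so $c_d \neq 0$ and $L_P$ really has degree $d$.

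Compute $c_d$ as a residue at $t = q^{-d}$:
\[
c_d = \lim_{t\to q^{-d}} (1-q^d t)\,\ehr_P(t,q) = \frac{W_P(q^{-d},q)}{\prod_{j=0}^{d-1}(1-q^{j-d})} = \frac{W_P(q^{-d},q)}{\prod_{k=1}^{d}(1-q^{-k})}.
\]
Using the identity $1-q^{-k} = q^{-k}(q^k-1)$ gives
\[
\prod_{k=1}^{d}(1-q^{-k}) = q^{-\binom{d+1}{2}} \prod_{k=1}^d (q^k-1) = q^{-\binom{d+1}{2}}(q-1)^d [d]!_q.
\]
Multiplying the leading coefficient $c_d(q-1)^d$ by $[d]!_q$ therefore telescopes to
\[
q\text{-volume of } P \;=\; c_d(q-1)^d [d]!_q \;=\; q^{\binom{d+1}{2}}\, W_P(q^{-d},q).
\]

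It remains to translate $W_P(q^{-d},q)$ into $W_{\overline{P}}$. Equation \eqref{Pbar_P} reads $\ehr_{\overline{P}}(t,q) = \ehr_P(tq^d, 1/q)$; substituting into the denominator of $\ehr_P$ shows that the resulting fraction has the same canonical denominator $\prod_{k=0}^d(1-q^k t)$, and by uniqueness of the numerator one obtains $W_{\overline{P}}(t,q) = W_P(tq^d, 1/q)$. Specializing to $q \mapsto 1/q$ and then $t=1$ yields $W_{\overline{P}}(1,1/q) = W_P(q^{-d},q)$, which combined with the previous display gives the claimed identity. The only real obstacle is tracking the power of $q$ that arises when rewriting $\prod_{k=1}^d(1-q^{-k})$ in terms of the $q$-factorial; once this bookkeeping is done cleanly, the result is a mechanical consequence of the partial fraction setup of Theorem \ref{ehrhart_poly} and the reversal identity of Section \ref{reversal}.
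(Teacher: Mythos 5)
Your proof is correct, and it reaches the same key intermediate identity as the paper --- namely that the leading coefficient of $L_P$ equals $q^{\binom{d+1}{2}}W_P(q^{-d},q)/[d]!_q$ with $d=\#P$ --- but by a genuinely different mechanism. The paper extracts this leading coefficient by combining Lemma \ref{q_binomial} (the explicit interpolating polynomial $\prod_{j=1}^{d}([j]_q+q^jx)/[j]_q$ whose values at $[n]_q$ are the $q$-binomial coefficients) with Lemma \ref{shift_interpol} (each shift of the evaluation index by one multiplies the leading coefficient by $q^{-d}$), applied term by term to the numerator $\sum_k h_k t^k$. You instead go back to the partial fraction decomposition from the proof of Theorem \ref{ehrhart_poly}, observe that the leading coefficient of $L_P=\sum_j c_j(1+(q-1)x)^j$ is $c_d(q-1)^d$, and compute $c_d$ as the residue of $\ehr_P$ at the extreme pole $t=q^{-d}$; the factor $q^{\binom{d+1}{2}}$ then emerges from rewriting $\prod_{k=1}^d(1-q^{-k})$ as $q^{-\binom{d+1}{2}}(q-1)^d[d]!_q$. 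Your residue route is more self-contained (it bypasses both lemmas and only needs Proposition \ref{poles_simples} to justify the simple-pole decomposition), and it makes explicit the transformation law $W_{\overline{P}}(t,q)=W_P(tq^d,1/q)$ of the numerator that the paper only states implicitly; the paper's route has the advantage of reusing machinery it needs elsewhere and of exhibiting the full list of coefficient contributions $h_kq^{-kd}$ rather than a single evaluation $W_P(q^{-d},q)$, though of course these agree. Both arguments are complete and the bookkeeping in yours checks out.
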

\begin{proof}
  Let us write
  \begin{equation}
    \label{ansatz}
    \ehr_P = \frac{\sum_{k=0}^{\# P} h_k t^k}{\prod_{j=0}^{\#P} 1-q^j t},
  \end{equation}
  for some coefficients $h_k$ in $\QQ[q]$.

  According to \eqref{Pbar_P}, the value at $t=1$ of the numerator of
  $\ehr_{\overline{P}}(q=1/q)$ is also the value at $t=1$ of the
  numerator of $\ehr_{P}(t q^{-\# P})$. This is given by
  \begin{equation*}
    \sum_{k=0}^{\# P} h_k q^{-k \# P}.
  \end{equation*}

  By lemma \ref{q_binomial} and lemma \ref{shift_interpol} applied to
  \eqref{ansatz}, the leading coefficient of the polynomial $L_P$ is
  given by
  \begin{equation}
    \left(\sum_{k=0}^{\# P} h_k q^{-k \# P}\right) \prod_{j=1}^{\# P}\frac{q^j }{[j]_q}
    = \left(\sum_{k=0}^{\# P} h_k q^{-k \# P}\right) \frac{q^{\binom{\# P+1}{2}}}{[\# P]!_q}.
  \end{equation}
  Comparing with the previous formula and using the definition of the
  $q$-volume concludes the proof.
\end{proof}

\subsection{Pyramids for posets}

As a special case of the general pyramid construction for polytopes
described in section \S \ref{pyramides}, one obtains the following
results.

\begin{proposition}
  Let $P$ be a poset. Consider the poset $P^-$ with one minimal element
  added. Then the $q$-Ehrhart series are related by
  \begin{equation}
    \ehr_{P^-} = \ehr_{P} / ( 1- q^{1+\# P} t).
  \end{equation}
\end{proposition}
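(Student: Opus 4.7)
The plan is to deduce this formula directly from the pyramid construction of section \ref{pyramides}, taking $m = 1 + \#P$. The key observation is that the order polytope $Q_{P^-}$ is a pyramid over $Q_P$: its vertices are the vertices of $Q_P$ (placed at $z_{\hat 0} = 0$, where $\hat 0$ denotes the added minimal element), together with one extra apex corresponding to the unique order filter of $P^-$ that contains $\hat 0$, namely the all-ones vertex $(1,\chi_P)$. This apex has $\lambda$-value exactly $1 + \#P$. Although it does not literally sit at $(1,0)$ as in the definition of $\pyr$, the Ehrhart-series computation of Proposition \ref{pour_pyr} depends only on the $\lambda$-value at the apex, so one expects $\ehr_{P^-} = \ehr_P / (1 - q^{1+\#P} t)$.

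To make this precise I would redo the short computation of Proposition \ref{pour_pyr} in the present setting. A lattice point of $n Q_{P^-}$ is an increasing labelling $f : P^- \to \{0, 1, \dots, n\}$. Conditioning on $i := f(\hat 0) \in \{0, \dots, n\}$, the restriction $f|_P$ is an increasing map $P \to \{i, i+1, \dots, n\}$, and subtracting $i$ from every value gives a bijection onto the set of lattice points $g$ of $(n-i) Q_P$. Under this bijection the weight transforms as
\begin{equation*}
\lambda(f) \;=\; i + \sum_{x \in P} \bigl(g(x) + i\bigr) \;=\; i(1 + \#P) + \lambda(g),
\end{equation*}
so summing over lattice points yields
\begin{equation*}
W_\lambda(n Q_{P^-}, q) \;=\; \sum_{i=0}^{n} q^{i(1+\#P)}\, W_\lambda\bigl((n-i)\, Q_P, q\bigr).
\end{equation*}

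Multiplying by $t^n$ and summing over $n$ then presents $\ehr_{P^-}(t,q)$ as the Cauchy product of $\ehr_P(t,q)$ with the geometric series $\sum_{i \geq 0} q^{i(1+\#P)} t^i = 1/(1-q^{1+\#P} t)$, which is exactly the claimed identity. There is no real obstacle: once the pyramid structure of $Q_{P^-}$ is spotted, every step is a direct unwinding of the definitions, and the only point needing care is correctly accounting for the contribution $i \cdot \#P$ coming from shifting the $P$-labelling up by $i$ before identifying with a point of $(n-i) Q_P$.
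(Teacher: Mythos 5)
Your proof is correct and follows essentially the same route as the paper: both identify $Q_{P^-}$ as a pyramid over $Q_P$ with apex of $\lambda$-value $1+\#P$ and obtain the result from the convolution identity of Proposition \ref{pour_pyr}. The only difference is cosmetic — the paper asserts an equivalence of pairs with $(\pyr(Q_P), m\oplus\lambda)$ and cites that proposition, while you rerun its short computation directly on increasing labellings, which also makes explicit why the apex sitting at the all-ones vertex rather than at $(1,0)$ is harmless.
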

\begin{proof}
  Indeed the order polytope of $P^-$ is a pyramid over the product
  $\{0\} \times Q_P$, with one more vertex where every coordinate is
  $1$. The sum-of-coordinates linear form takes the value $ 1+\# P$ on
  this vertex. This pair (polytope, linear form) is equivalent as a
  pair to $(\pyr(Q_P),m\oplus\lambda)$ with $m = 1+\# P$ and $\lambda$
  the sum-of-coordinates linear form on $Q_P$. The result then follows
  from proposition \ref{pour_pyr}.
\end{proof}

\begin{proposition}
  Consider the poset $P^+$ with one maximal element added. Then
  the $q$-Ehrhart series are related by
  \begin{equation}
    \ehr_{P^+}(t,q) = \ehr_{P} (q t,q) / ( 1- t).
  \end{equation}
\end{proposition}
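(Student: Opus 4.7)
The plan is to argue directly from the combinatorial description of lattice points in the dilates of $Q_{P^+}$. An alternative is available via the preceding proposition combined with the reversal duality of Section \ref{reversal}, since $\overline{P^+} = (\overline{P})^-$, but the direct computation seems cleaner and mirrors the derivation of Proposition \ref{pour_pyr}.

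First, I would describe the lattice points of $n Q_{P^+}$. Writing $*$ for the added maximum, a lattice point of $n Q_{P^+}$ is an order-preserving map $z \colon P^+ \to \{0,1,\dots,n\}$, and every such map is determined by choosing $k = z_* \in \{0,\dots,n\}$ together with an order-preserving map $z|_P \colon P \to \{0,\dots,k\}$, i.e., a lattice point of $k Q_P$. Since $\lambda$ is the sum of coordinates on $\ZZ^{P^+}$, the weight factors as $q^{\lambda(z)} = q^k \cdot q^{\lambda(z|_P)}$, and one obtains the partition identity
\begin{equation*}
  W_\lambda(n Q_{P^+}, q) \;=\; \sum_{k=0}^{n} q^k \, W_\lambda(k Q_P, q).
\end{equation*}
Multiplying by $t^n$, summing over $n \geq 0$ and swapping the order of summation then gives
\begin{equation*}
  \ehr_{P^+}(t,q) \;=\; \sum_{k \geq 0} q^k W_\lambda(k Q_P, q) \sum_{n \geq k} t^n
  \;=\; \frac{1}{1-t}\sum_{k \geq 0} W_\lambda(k Q_P, q)\,(qt)^k
  \;=\; \frac{\ehr_P(qt,q)}{1-t},
\end{equation*}
which is the desired identity.

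There is no substantial obstacle; the only subtle point is recognizing that adding a maximum element corresponds geometrically to the pyramid on $\{1\} \times Q_P$ with apex at the origin, rather than the pyramid of Section \ref{pyramides} whose apex is pushed off the base. This is why the formula here differs from that of Proposition \ref{pour_pyr}: the apex is a lattice point on which $\lambda$ vanishes, which is what produces the factor $(qt)$ inside $\ehr_P$ and the factor $1/(1-t)$ outside, rather than the reverse.
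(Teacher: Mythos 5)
Your proof is correct, but it takes a genuinely different route from the paper's. The paper disposes of this proposition in one line by combining the preceding proposition (for $P^-$) with Remark \ref{dual_poset} and Proposition \ref{prop_reversal}: since $\overline{P^+} = (\overline{P})^-$, the reversal duality transports the formula $\ehr_{(\overline{P})^-} = \ehr_{\overline{P}}/(1-q^{1+\#P}t)$ into the stated identity. You instead compute directly: a lattice point of $n Q_{P^+}$ is determined by the value $k = z_*$ of the new maximum together with a point of $k Q_P$, the weight factors as $q^k q^{\lambda(z|_P)}$, and summing the resulting identity $W_\lambda(n Q_{P^+},q)=\sum_{k=0}^{n} q^k W_\lambda(kQ_P,q)$ against $t^n$ and exchanging summations yields $\ehr_P(qt,q)/(1-t)$. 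Both arguments are sound; the paper's buys brevity and showcases the reversal duality it has already built, while yours is self-contained (it needs neither the $P^-$ proposition nor Section \ref{reversal}) and makes the geometry explicit --- your closing observation that $Q_{P^+}$ is the pyramid with apex at the origin over $\{1\}\times Q_P$, i.e.\ essentially the $\bop$ construction of Section \ref{empty} rather than the $\pyr$ construction of Section \ref{pyramides}, correctly explains why the dilation parameter $t$ gets twisted to $qt$ here instead of picking up a new denominator factor $1-q^m t$.
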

\begin{proof}
  This can be deduced from the previous proposition for the opposite
  poset, remark \ref{dual_poset} and proposition \ref{prop_reversal}.
\end{proof}

\subsection{Vanishing at small negative $q$-integers}

Let $P$ be a poset, and let $\ell$ be the length of the longest
increasing chain of $P$. 

Because there are no strictly increasing colourings of $P$ by integers
${1,\dots,n-1}$ if $n \leq \ell$, the $q$-Ehrhart polynomial $L_P$
vanishes at $[-n]_q$ for $1 \leq n \leq \ell$.

This implies that the $q$-Ehrhart polynomial $L_P$ is divisible by
$[n]+q^n x$ for every $1\leq n\leq \ell$.

By the remarks of \S \ref{period_cyclotomic}, this gives many
cyclotomic factors in the values of $L_P$ at $q$-integers.

\subsection{$q$-Ehrhart polynomials of minuscule posets}

For some posets, there are simple product formulas for the weighted
sums over increasing colourings. One can deduce from them product
formulas for the $q$-Ehrhart polynomials of these posets.

For example, consider the poset $P_{m,n} = A_m \times A_n$, where
$A_m$ is the total order of size $m$.

A famous result of MacMahon, usually described using plane partitions
inside a box of size $m \times n \times k$, states that the weighted
sum of decreasing colourings of the poset $P_{m,n}$ by integers in
$\{0,\dots, k\}$ is given by
\begin{equation*}
  \prod_{i=1}^{m} \prod_{j=1}^{n} \frac{[i+j-1+k]_q}{[i+j-1]_q}.
\end{equation*}

From this, one can easily deduce that
\begin{proposition}
  The $q$-Ehrhart polynomial of $P_{m,n}$ is
  \begin{equation}
    \prod_{i=1}^{m} \prod_{j=1}^{n} \frac{[i+j-1]_q+ x q^{i+j-1}}{[i+j-1]_q}.
  \end{equation}
\end{proposition}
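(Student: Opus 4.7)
The plan is to exhibit a polynomial in $x$ with coefficients in $\QQ(q)$ that, at each $q$-integer $[k]_q$ for $k \geq 0$, takes the value $W_{\lambda}(k Q_{P_{m,n}},q)$. By Theorem \ref{ehrhart_poly}, the $q$-Ehrhart polynomial $L_{P_{m,n}}$ is uniquely determined by its values at $[0]_q,\dots,[mn]_q$, so matching values on any infinite subset of $\{[k]_q : k\ge 0\}$ forces equality.

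First I would reconcile MacMahon's formula, stated in the text for decreasing colourings, with the $q$-Ehrhart convention, under which lattice points of $k Q_{P_{m,n}}$ correspond to increasing colourings by $\{0,\dots,k\}$. The two weighted sums coincide because $P_{m,n}$ is isomorphic to its opposite via the involution $(i,j)\mapsto(m+1-i,n+1-j)$: pulling back an increasing colouring through this isomorphism yields a decreasing colouring with the same total weight. Hence
\[
W_{\lambda}(k Q_{P_{m,n}},q) \;=\; \prod_{i=1}^{m}\prod_{j=1}^{n} \frac{[i+j-1+k]_q}{[i+j-1]_q}.
\]

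The next step is the elementary identity $[a+b]_q = [a]_q + q^{a}[b]_q$, immediate from $[n]_q=(q^n-1)/(q-1)$. Applied with $a=i+j-1$ and $b=k$, each factor rewrites as
\[
\frac{[i+j-1+k]_q}{[i+j-1]_q} \;=\; \frac{[i+j-1]_q + q^{i+j-1}\,[k]_q}{[i+j-1]_q},
\]
which is precisely the value at $x=[k]_q$ of the linear polynomial $([i+j-1]_q+q^{i+j-1}x)/[i+j-1]_q$. Taking the product over $i$ and $j$ shows that the polynomial displayed in the proposition assumes the value $W_{\lambda}(k Q_{P_{m,n}},q)$ at $x=[k]_q$ for every $k \geq 0$.

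Finally, that product has degree $mn$, matching $\max_{Q_{P_{m,n}}}\lambda = mn = \deg L_{P_{m,n}}$. Polynomial interpolation at the $q$-integers $[0]_q,\dots,[mn]_q$ then identifies it with $L_{P_{m,n}}$. There is no serious obstacle in this argument; the only subtlety is the switch of convention between MacMahon's decreasing colourings and the increasing colourings counted by $W_{\lambda}$, which the self-duality of $P_{m,n}$ resolves immediately.
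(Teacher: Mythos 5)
Your argument is correct and is precisely the deduction the paper leaves implicit: the paper states MacMahon's box formula and simply says the proposition "can easily be deduced" from it, and your steps (self-duality of $P_{m,n}$ to pass from decreasing to increasing colourings, the identity $[a+k]_q=[a]_q+q^a[k]_q$ to recognise each factor as a linear polynomial evaluated at $x=[k]_q$, and uniqueness of the interpolating polynomial from Theorem \ref{ehrhart_poly}) are exactly the intended ones. No gaps.
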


More generally, this can be applied to the similar known formulas for
other minuscule posets \cite{proctor,stembridge}, to obtain formulas for the
$q$-Ehrhart polynomial as a product of linear polynomials.

\section{Empty polytopes}

\label{empty}

An \textbf{empty lattice polytope} is a lattice polytope $Q$ such that
there is no lattice point in the interior $\int(Q)$ of $Q$.

This section considers the $q$-Ehrhart theory in the special case of
empty polytopes. For these polytopes, one can define a special
evaluation of the $q$-Ehrhart polynomial, which seems to have
interesting properties.

\subsection{Special value for empty polytopes}

\label{special_value}

Let $Q$ be an empty lattice polytope. By $q$-Ehrhart reciprocity
(theorem \ref{ehrhart_reciprocity}), the $q$-Ehrhart polynomial
$L_{Q,\lambda}$ vanishes at $[-1]_q=-1/q$, hence is divisible by $1+q
x$.

Let us define the \textbf{special value} of $Q$ by
\begin{equation}
  \sv_{Q,\lambda} =  {\left( \frac{L_{Q,\lambda}(x)}{1+q x} \right)}\bigg{|}_{x=-1/q}
\end{equation}

By construction, the special value is a fraction in $\QQ(q)$, with
possibly poles at $0$ and roots of unity.

The special value has the following property, which is not obviously
true, because there exist polytopes with multiplicities in the
denominator of their $q$-Ehrhart polynomials.

\begin{proposition}
  \label{mystery_sv}
  The poles of the special value $\sv_{Q,\lambda}$ at roots of unity
  are simple.
\end{proposition}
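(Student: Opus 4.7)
The plan is to recast the special value as a quotient of two-variable polynomials and carry out a local analysis at a single point of the $(x,q)$-plane. Write $L_{Q,\lambda}(x) = P(x,q)/D(q)$ with $P \in \QQ[x,q]$ and $D \in \QQ[q]$ the common denominator of the coefficients. By Theorem \ref{ehrhart_reciprocity} applied to the empty polytope $Q$, $L_{Q,\lambda}(-1/q) = 0$, so $P(-1/q,q) \equiv 0$ as a rational function of $q$; since $qx+1$ is irreducible in $\QQ[x,q]$, polynomial division yields $P = (qx+1)\,R$ for some $R \in \QQ[x,q]$. Substituting $x = -1/q$ in the identity $L_{Q,\lambda}(x)/(1+qx) = R(x,q)/D(q)$ then gives
\begin{equation*}
\sv_{Q,\lambda} = \frac{R(-1/q,q)}{D(q)}.
\end{equation*}

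Let $\xi$ be a root of unity. The case $\xi=1$ is trivial: the Lagrange interpolation formula for $L_{Q,\lambda}$ at the nodes $[0]_q,\dots,[m]_q$ has denominators $\prod_{k\neq n}([n]_q-[k]_q)$ that do not vanish at $q=1$, so $L_{Q,\lambda}$, and hence $\sv_{Q,\lambda}$, is regular there. So assume $\xi$ has order $N \geq 2$ and set $e := \mathrm{ord}_\xi(D)$. The desired inequality $\mathrm{ord}_\xi(\sv_{Q,\lambda}) \geq -1$ reduces to $\mathrm{ord}_\xi(R(-1/q,q)) \geq e-1$; expanding $R$ in Taylor series about $p := (-1/\xi,\xi)$ and substituting $x=-1/q$ shows $\mathrm{ord}_\xi(R(-1/q,q)) \geq \mathrm{ord}_p(R)$, while the order-one vanishing of $qx+1$ at $p$ (its differential there is nonzero) gives $\mathrm{ord}_p(R) = \mathrm{ord}_p(P) - 1$. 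So it suffices to prove $\mathrm{ord}_p(P) \geq e$.

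The crucial input is that for every $n \geq 0$ the value $L_{Q,\lambda}([n]_q) = W_\lambda(nQ,q)$ lies in $\QQ[q]$, hence $D$ divides $P([n]_q,q)$ in $\QQ[q]$ and $P([n]_q,q) = O((q-\xi)^e)$ at $q=\xi$. For the infinitely many $n \geq 0$ with $n \equiv N-1 \pmod{N}$, the algebraic curve $\gamma_n := \{x=[n]_q\}$ passes through $p$, and a direct computation shows that its tangent slope $[n]_q'(\xi)$ is a nonconstant affine function of $n$ within this residue class, hence takes infinitely many distinct values. An induction on $j = 0,1,\dots,e-1$ then proves $\mathrm{ord}_p(P) \geq e$: assuming the Taylor coefficients $P_{a,b}$ of $P$ at $p$ vanish for all $a+b<j$, the coefficient of $(q-\xi)^j$ in $P([n]_q,q)$ collapses to $\sum_{a+b=j} P_{a,b}\,([n]_q'(\xi))^a$, a polynomial of degree $\leq j$ in the tangent slope; its vanishing at infinitely many distinct slopes forces $P_{a,j-a} = 0$ for every $a$. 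The main obstacle is verifying this collapse carefully: one must track why the higher-order Taylor coefficients of $[n]_q$ at $q=\xi$ (which depend on $n$ in a complicated manner) enter the coefficient of $(q-\xi)^j$ in $P([n]_q,q)$ only through products with $P_{a',b'}$ for $a'+b'<j$, so that they are killed by the inductive hypothesis, leaving only a polynomial in the tangent slope $[n]_q'(\xi)$ to which the infinitely-many-distinct-values argument applies.
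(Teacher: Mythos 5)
Your argument is correct, but it takes a genuinely different route from the paper. The paper substitutes $y=1+qx$, writes $L(y)=\sum_{i=1}^{d}c_i y^i$ so that $\sv_{Q,\lambda}=c_1$, and then computes $c_1$ by Lagrange interpolation at the nodes $y=[1]_q,\dots,[d]_q$ (where $L$ takes polynomial values): the coefficient of $y$ in each interpolation basis polynomial is, up to sign and a power of $q$, equal to $\frac{1}{[i]_q}\qbinom{d}{i}$, which visibly has only simple poles at roots of unity. You instead perform a local analysis at the point $p=(-1/\xi,\xi)$ of the $(x,q)$-plane: the infinitely many curves $x=[n]_q$ with $n\equiv N-1\pmod N$ all pass through $p$ with pairwise distinct tangent slopes, and since $P([n]_q,q)=D(q)W_\lambda(nQ,q)$ vanishes at $q=\xi$ to order at least $e=\operatorname{ord}_\xi(D)$, your induction on the total Taylor degree forces $P$ to vanish at $p$ to order at least $e$; dividing out the simple zero of $qx+1$ at $p$ then yields the bound $\operatorname{ord}_\xi(\sv_{Q,\lambda})\geq -1$. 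The key collapse step you flag is sound: a term $P_{a,b}(x+1/\xi)^a(q-\xi)^b$ contributes to the coefficient of $(q-\xi)^j$ only when $a+b\leq j$, the higher Taylor coefficients of $[n]_q$ entering only through terms with $a+b<j$, which the inductive hypothesis kills. Both proofs rest on the same input (polynomiality of the values at $q$-integers), but the paper's is shorter and yields an explicit closed formula for the residue data, whereas yours is more geometric and does not require computing any interpolation coefficients; on the other hand it is longer and needs the careful bookkeeping of the Taylor expansion that you correctly identify as the delicate point.
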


\begin{proof}
  Let $y = 1 + q x$ and let $L(y)$ be the polynomial
  $L_{Q,\lambda}((y-1)/q)$. One can write
  \begin{equation*}
    L(y) = \sum_{i=1}^{d} c_i y^i,
  \end{equation*}
  for some integer $d$. The proposition is then equivalent to the
  statement that $c_1$ has only simple poles at root of unity.

  The vector space over $\QQ(q)$ spanned by $y^i$ for $i=1,\dots,d$
  has another basis, given by polynomials
  \begin{equation*}
    P_i(y) = \frac{y}{[i]_q} \frac{\prod_{1 \leq j\not=i} y-[j]_q}{\prod_{1 \leq j \not=i}[i]_q-[j]_q},
  \end{equation*}
  for $i=1,\dots,d$.

  The coefficients $c_i$ are the coefficients of the polynomial $L(y)$
  in the basis $(y^i)_{1\leq i\leq d}$. The values of $L(y)$ at the $q$-integers
  $[i]_q$ are polynomials in $q$ and give the coefficients of $L(y)$
  in the basis $(P_i)_{1\leq i\leq d}$.

  The change of basis matrix from the basis $(P_i)_{1\leq i\leq d}$ to the basis
  $(y^i)_{1\leq i\leq d}$ is given by the expansion of the polynomials $P_i$ in
  powers of $y$. In particular, the coefficient $c_1$ is computed
  using only the values of $L(y)$ at $q$-integers $[i]_q$ and the
  coefficients of $y$ in the polynomials $P_i$, which are given by
  \begin{equation}
    (-1)^{d-1} \frac{1}{[i]_q}\frac{\prod_{1\leq j\not=i} [j]_q}{\prod_{1\leq j \not=i}[i]_q-[j]_q}.
  \end{equation}
  This expression can be rewritten using the $q$-binomials (up to sign
  and a power of $q$) as
  \begin{equation}
     \frac{1}{[i]_q}\qbinom{d}{i},
  \end{equation}
  and has therefore only simple poles at roots of unity. The statement follows.
\end{proof}


One may wonder whether this special value has any geometric meaning.

For the examples \ref{exa}, \ref{exb}, \ref{exc} and \ref{exd}, the
special values are $1$, $1/(1+q)$, $0$ and $-1/q$.

As the polytope $Q_P$ associated with a poset $P$ is empty, one can
define the special value $\sv_P$. In the companion paper
\cite{serieqx}, it is proved using different methods that, for every
poset $P$ which is a rooted tree (with the root as maximum), the
special value $\sv_P$ has only simple poles at root of unity.


\subsection{Hahn operator}

One defines the $\QQ(q)$-linear operator $\Delta$ by
\begin{equation}
  \Delta(f) = \frac{f(1+q x)-f(x)}{1+ q x -x},
\end{equation}
acting on polynomials in $x$.

This is a $q$-analog of the derivative, which has been introduced by
Hahn in \cite{hahn}. The kernel of $\Delta$ is the space of constant
polynomials. The restriction of $\Delta$ to the space of multiples of
$1+ q x$ is an isomorphism with $\QQ(q)[x]$.

One can translate this action of $\Delta$ on polynomials into an
action on the values at $[n]_q$. Let $f_n$ be the value
$f([n]_q)$. Then
\begin{equation}
  \label{delta_val}
  \Delta(f)([n]_q) = \frac{f_{n+1}-f_{n}}{q^n}.
\end{equation}



For a lattice polytope $Q$ in the lattice $M$, define a new polytope
$\bop(Q)$ in the lattice $\ZZ \times M$ as the convex hull of the
vertex $(0,0)$ and the product $\{1\} \times Q$. As a polytope, this
is just the pyramid over $Q$. The linear form $\lambda$ on $M$ is
extended to a linear form $\bop(\lambda)$ defined by 
\begin{equation*}
  \bop(\lambda)(k,v)=k+\lambda(v).
\end{equation*}

The positivity and genericity conditions clearly hold for
$(\bop(Q),\bop(\lambda))$.

\begin{proposition}
  \label{delta_bop}
  One has
  \begin{equation}
    \Delta(L_{\bop(Q),\bop(\lambda)}) = q L_{Q,\lambda} (1+q x).
  \end{equation}
\end{proposition}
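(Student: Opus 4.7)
The plan is to identify the value of $L_{\bop(Q),\bop(\lambda)}$ at each $q$-integer $[n]_q$ by slicing the pyramid, then apply the Hahn operator in its values form \eqref{delta_val}, and finally recognize the result via lemma \ref{shift_interpol}.

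First, I would describe the lattice points of $n \bop(Q)$ explicitly. The polytope $n \bop(Q)$ is the pyramid with apex $(0,0)$ and base $\{n\}\times nQ$, so its cross-section at integer height $k$ (for $0 \leq k \leq n$) is $\{k\} \times kQ$. Under $\bop(\lambda)(k,v)=k+\lambda(v)$, every lattice point in that slice contributes $q^{k+\lambda(v)}$. Summing yields
\begin{equation*}
W_{\bop(\lambda)}(n \bop(Q),q) \;=\; \sum_{k=0}^{n} q^{k} W_{\lambda}(kQ,q) \;=\; \sum_{k=0}^{n} q^{k} L_{Q,\lambda}([k]_q),
\end{equation*}
so that by theorem \ref{ehrhart_poly} we have $L_{\bop(Q),\bop(\lambda)}([n]_q) = \sum_{k=0}^{n} q^{k} L_{Q,\lambda}([k]_q)$ for every $n \geq 0$.

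Next, I would exploit the telescoping structure. Using the identity \eqref{delta_val},
\begin{equation*}
\Delta(L_{\bop(Q),\bop(\lambda)})([n]_q) \;=\; \frac{L_{\bop(Q),\bop(\lambda)}([n+1]_q) - L_{\bop(Q),\bop(\lambda)}([n]_q)}{q^{n}} \;=\; \frac{q^{n+1} L_{Q,\lambda}([n+1]_q)}{q^{n}} \;=\; q\, L_{Q,\lambda}([n+1]_q).
\end{equation*}
By lemma \ref{shift_interpol}, $L_{Q,\lambda}([n+1]_q) = L_{Q,\lambda}(1+qx)\big|_{x=[n]_q}$, hence
\begin{equation*}
\Delta(L_{\bop(Q),\bop(\lambda)})([n]_q) \;=\; \bigl(q\, L_{Q,\lambda}(1+qx)\bigr)\big|_{x=[n]_q} \qquad \text{for all } n \geq 0.
\end{equation*}

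Since both $\Delta(L_{\bop(Q),\bop(\lambda)})$ and $q\, L_{Q,\lambda}(1+qx)$ are polynomials in $x$ over $\QQ(q)$ that agree on the infinite set $\{[n]_q : n \geq 0\}$ of pairwise distinct points, they coincide as polynomials. This yields the stated identity. There is no real obstacle here; the only point needing care is the pyramid-slicing description of $n\bop(Q)$, which works cleanly because the apex $(0,0)$ is at the origin and the base lies in the hyperplane of first coordinate equal to $1$, so dilations preserve this conic structure and integer heights cut out integer dilations of $Q$.
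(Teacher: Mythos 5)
Your proof is correct and follows essentially the same route as the paper: both compute the difference of weighted sums over $(n+1)\bop(Q)$ and $n\bop(Q)$ by slicing the pyramid, identify it as $q^{n+1}W_\lambda((n+1)Q,q)$, and conclude via \eqref{delta_val} and agreement at all $q$-integers. Your write-up merely makes explicit the slicing formula $W_{\bop(\lambda)}(n\bop(Q),q)=\sum_{k=0}^{n}q^{k}W_{\lambda}(kQ,q)$ and the interpolation step that the paper leaves implicit.
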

\begin{proof}
  By \eqref{delta_val}, the value $q^n
  \Delta(L_{\bop(Q),\bop(\lambda)})([n]_q)$ is the weighted sum over
  $(n+1)\bop(Q)$ minus the weighted sum over $n\bop(Q)$. By the
  definition of $\bop(Q)$, this is nothing else than $q^{n+1} $ times
  the weighted sum over $(n+1) Q$, namely
  $L_{Q,\lambda}([n+1]_q)$. The result follows.
\end{proof}

\subsection{Umbral equalities}

Recall that Carlitz $q$-Bernoulli numbers (introduced in \cite{carlitz}) are rational functions in $q$ defined by $\beta_0=1$ and
\begin{equation}
  \label{recu_beta}
  q(q \beta +1)^n-\beta_n=
  \begin{cases}
    1 \text{ if }n=1,\\
    0 \text{ if }n>1,
   \end{cases}
\end{equation}
where by convention one replaces $\beta^k$ by $\beta_k$ after
expansion of the binomial power.

The Carlitz $q$-Bernoulli numbers have only simple poles at some roots
of unity, and their value at $q=1$ are the classical Bernoulli
numbers.

Let $P$ be a polynomial in $x$ with coefficients in $\QQ(q)$. Let us
call $q$-umbra of $P$ the value at $P$ of the $\QQ(q)$-linear form
which maps $x^n$ to the Carlitz $q$-Bernoulli number $\beta_n$. Let us
denote by $\Psi(P)$ the $q$-umbra of $P$. It is a rational function in
$q$.

Let $Q$ be an empty polytope. The polytope $\bop(Q)$, which is a
pyramid over $Q$, is also empty.

One has the following relations between the special value and the $q$-umbra.

\begin{proposition}
  One has
  \begin{equation}
    \sv_{\bop(Q),\bop(\lambda)} = \Psi( L_{Q,\lambda} ).
  \end{equation}
\end{proposition}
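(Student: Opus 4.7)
The plan is to change variables to $y = 1+qx$, recognize $\sv_{\bop(Q),\bop(\lambda)}$ as the coefficient of $y^1$ in a related polynomial, and reduce the proposition to a polytope-free identity about Hahn antiderivatives.

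Set $M := L_{\bop(Q),\bop(\lambda)}$ and $C(y) := M((y-1)/q)$, viewed as a polynomial in $y$. Because $\bop(Q)$ is empty, $M$ is divisible by $1+qx$, hence $C$ is divisible by $y$, i.e.\ $C(0)=0$. Writing $M(x) = (1+qx)R(x)$, we have $C(y) = y\,R((y-1)/q)$, so
\begin{equation*}
  \sv_{\bop(Q),\bop(\lambda)} = R(-1/q) = C'(0) = [y^1]\,C(y).
\end{equation*}
A direct calculation with the substitution shows that the Hahn operator in $x$ applied to $M$ corresponds to $q$ times the Hahn operator in $y$ applied to $C$; combined with Proposition \ref{delta_bop} this gives $\Delta C = L_{Q,\lambda}(y)$, so $C$ is the unique polynomial antiderivative of $L_{Q,\lambda}$ under $\Delta$ that vanishes at $0$.

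This reduces the proposition to the purely formal claim: for any $P\in\QQ(q)[y]$, if $C$ is the unique polynomial with $\Delta C = P$ and $C(0)=0$, then $[y^1]\,C(y) = \Psi(P)$. Define a $\QQ(q)$-linear functional $\Phi(P) := [y^1]\,C(y)$ and verify that it agrees with $\Psi$ on the family $\{q(1+qy)^n - y^n\}_{n\geq 0}$, which is a $\QQ(q)$-basis of $\QQ(q)[y]$ because the $n$-th element has degree $n$ with leading coefficient $q^{n+1}-1 \neq 0$. The key computation is the identity
\begin{equation*}
  \Delta\bigl(y^n(1+(q-1)y)\bigr) = q(1+qy)^n - y^n \qquad (n\geq 0),
\end{equation*}
which follows from the observation $1+(q-1)(1+qy) = q(1+(q-1)y)$ and hence the factorization $C(1+qy) - C(y) = (1+(q-1)y)\bigl(q(1+qy)^n - y^n\bigr)$ for $C(y) = y^n(1+(q-1)y)$. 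For $n\geq 1$ this antiderivative already vanishes at $0$, so $\Phi(q(1+qy)^n - y^n) = [y^1](y^n + (q-1)y^{n+1})$, which is $1$ if $n=1$ and $0$ otherwise; for $n=0$ the unique antiderivative of $q-1$ vanishing at $0$ is $(q-1)y$, so $\Phi(q-1) = q-1$. The values of $\Psi$ on the same polynomials are dictated by the defining recurrence \eqref{recu_beta} and match these, so $\Phi = \Psi$.

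The main obstacle is producing the explicit Hahn antiderivative $y^n(1+(q-1)y)$; once this ansatz is found, every step of the proof becomes routine verification.
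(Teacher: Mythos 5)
Your proof is correct, but it takes a genuinely different route from the paper. The paper's proof observes that both sides are $\QQ(q)$-linear functionals of $L_{Q,\lambda}$ (the left side because $\bop$ corresponds, via Proposition \ref{delta_bop}, to applying the inverse of $\Delta$ restricted to multiples of $1+qx$, and the special value is linear), and then concludes by checking the identity on a spanning family of polytopes, namely order polytopes of tree posets --- a verification that is delegated entirely to the companion article \cite[\S 4]{serieqx}. You instead make the same linearity reduction explicit in the variable $y=1+qx$ (so that the special value becomes the coefficient $[y^1]$ of the unique Hahn antiderivative vanishing at $0$, exactly the quantity $c_1$ appearing in the paper's proof of Proposition \ref{mystery_sv}) and then verify the resulting polytope-free identity directly on the triangular basis $\{q(1+qy)^n-y^n\}_{n\ge 0}$, which is tailor-made to match Carlitz's defining recurrence \eqref{recu_beta}. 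The key identity $\Delta\bigl(y^n(1+(q-1)y)\bigr)=q(1+qy)^n-y^n$ checks out (via $1+(q-1)(1+qy)=q(1+(q-1)y)$), as do the boundary case $n=0$ and the change-of-variable computation $\Delta_x = q\,\Delta_y$. What your approach buys is a self-contained proof with no appeal to the companion paper and no need to know that the $L_P$ for tree posets span $\QQ(q)[x]$; what the paper's approach buys is brevity and a link to the tree-indexed series that motivated the whole construction.
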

\begin{proof}
  The right hand-side is the action of a $\QQ(q)$-linear operator on a
  element of $\QQ(q)[x]$.

  By definition of the special value, mapping a multiple of $1 + q x$
  to its special value is a $\QQ(q)$-linear operator.

  By proposition \ref{delta_bop}, the left-hand side is obtained from
  $L_{Q,\lambda}$ by first applying the inverse of $\Delta$, then
  taking the special value. This is also a $\QQ(q)$-linear operator.

  It is therefore enough to check that this equality holds for enough
  polytopes, such that the $L_{Q,\lambda}$ span a basis of the space
  of polynomials. This has been done in the companion article
  \cite[\S 4]{serieqx} for the order polytopes of all tree posets.
\end{proof}

\begin{proposition}
  One has
  \begin{equation}
    \sv_{\bop\bop(Q),\bop\bop(\lambda)} = \Psi( -x L_{Q,\lambda} ).
  \end{equation}
\end{proposition}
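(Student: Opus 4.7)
My plan would closely mirror the proof of the preceding proposition. First I would apply that proposition to the pyramid $\bop(Q)$ equipped with the linear form $\bop(\lambda)$; this is legitimate because $\bop$ preserves emptiness (as stated in the subsection introduction), so $\bop(Q)$ is again an empty polytope. That application yields immediately
\[
\sv_{\bop\bop(Q),\bop\bop(\lambda)} = \Psi\bigl(L_{\bop(Q),\bop(\lambda)}\bigr),
\]
reducing the proposition to proving the polynomial identity $\Psi(L_{\bop(Q),\bop(\lambda)}) = \Psi(-x\, L_{Q,\lambda})$.

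Next I would observe that both sides of this reduced identity are $\QQ(q)$-linear functionals of $L_{Q,\lambda} \in \QQ(q)[x]$. The right-hand side is linear by inspection. For the left-hand side, Proposition \ref{delta_bop} characterizes $L_{\bop(Q),\bop(\lambda)}$ as the unique polynomial divisible by $1+qx$ whose image under the Hahn operator $\Delta$ equals $q\, L_{Q,\lambda}(1+qx)$. Since the restriction of $\Delta$ to the space of multiples of $1+qx$ is invertible and linear, the assignment $L_{Q,\lambda} \mapsto L_{\bop(Q),\bop(\lambda)}$ is $\QQ(q)$-linear, and composing with the $\QQ(q)$-linear operator $\Psi$ preserves this linearity.

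By linearity it then suffices to verify the identity on a family of polynomials spanning $\QQ(q)[x]$. The $q$-Ehrhart polynomials of order polytopes of rooted trees furnish such a spanning family, and the verification of the required equality on this family is provided in the companion paper \cite[\S 4]{serieqx}, exactly as was invoked in the proof of the preceding proposition.

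The principal obstacle is ensuring that the companion paper does indeed treat this doubly-iterated case rather than only the single-$\bop$ identity. Should that not be the case, a supplementary direct umbral computation would be required: the natural approach is to combine the defining recursion $q(q\beta+1)^n - \beta_n = \delta_{n,1}$ of the Carlitz $q$-Bernoulli numbers, the relation $\Delta L_{\bop(Q),\bop(\lambda)} = q\, L_{Q,\lambda}(1+qx)$ from Proposition \ref{delta_bop}, and the vanishing $L_{\bop(Q),\bop(\lambda)}(-1/q) = 0$, in order to show directly that $L_{\bop(Q),\bop(\lambda)} + x\, L_{Q,\lambda}$ lies in the kernel of $\Psi$.
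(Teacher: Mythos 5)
Your proposal is correct and follows essentially the same route as the paper: the paper likewise observes that both sides are $\QQ(q)$-linear functionals of $L_{Q,\lambda}$ (the left side via the invertibility of $\Delta$ on multiples of $1+qx$, as in Proposition \ref{delta_bop}) and then defers the verification on the spanning family of order polytopes of tree posets to the companion article \cite[\S 4]{serieqx}. Your preliminary reduction via the single-$\bop$ proposition is a harmless extra step, and your caution about exactly which identity the companion paper checks is reasonable but does not change the substance of the argument.
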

\begin{proof}
  For the same reasons as before, both sides are $\QQ(q)$-linear
  operators acting on $L_{Q,\lambda}$. It is therefore enough to check
  it on enough polytopes.

  This has been done in the companion article \cite[\S 4]{serieqx} for the
  order polytopes of all tree posets.
\end{proof}





\bibliographystyle{plain}
\bibliography{q_ehrhart}

\end{document}